\theoremstyle{plain}
\newtheorem{definicion}{Definition}
\newtheorem{teorema}{Theorem}
\newtheorem{proposition}{Proposition}
\newtheorem{corolario}{Corollary}
\theoremstyle{remark}
\newtheorem{remark}{Remark}
\DeclareMathOperator*{\slim}{slim}
\newcommand{\Id}{{I}}
\newcommand{\Hilbert}{{\mathcal H}}
\newcommand{\Banach}{{X}}
\newcommand{\N}{{\mathbb N}}
\newcommand{\R}{{\mathbb R}}
\newcommand{\Lin}{{\mathcal{L} }}
\newcommand{\Interval}{\mathcal{I}}
\author[$\dagger$]{Nikita Artamonov\thanks{nikita.artamonov@gmail.com}}
\affil[$\dagger$]{MGIMO University}
\title{Solvability of an Operator Riccati Integral Equation in a
Reflexive Banach Space}
\begin{document}

\maketitle

\begin{abstract}
We show that if  $\Banach$ is a reflexive Banach space, then a nonautonomous
operator Riccati integral equation has a unique strongly continuous
self-adjoint nonnegative solution $P(t)\in\Lin(\Banach,\Banach^*)$
\end{abstract}

\section{Preliminaries}

It is well known
\cite{RepresentationAndControl, CurtainPritchard, Lasiecka2004, Gibson1979, PritchardSalamon},
that the solution of a linear-quadratic control problem on a finite interval
can be expressed via the solution of an operator Riccati (differential or
integral) equation considered in the space of operator functions.

Some results on the solvability of autonomous and nonautonomous Riccati
equations in operator functions ranging in the space $\Lin(\Hilbert)$
where $\Hilbert$ is a Hilbert space, were obtained in
\cite{RepresentationAndControl, Lasiecka2004, LasieckaTriggiani1992,
DaPratoIchikawa} and \cite{CurtainPritchard, Gibson1979}, respectively.

A triple $\Banach\hookrightarrow\Hilbert\hookrightarrow\Banach^*$ of
spaces with dense embeddings was considered in \cite{PritchardSalamon}
for a Hilbert space $\Banach$ and in \cite{Artamonov} for a reflexive
Banach space $\Banach$. In these papers, the solvability of an autonomous
Riccati equation in operator functions ranging in the spaces
$\Lin(\Banach^*,\Banach)$ and $\Lin(\Banach,\Banach^*)$, respectively,
was established. In the papers \cite{Artamonov, Yong}, the solvability of
the Riccati equation was used to prove the solvability of systems of
forward–backward evolution equations.

The present paper generalizes the above-mentioned results.
We prove that there exists a unique solution of the Riccati integral equation
for strongly continuous operator functions ranging in the space
$\Lin(\Banach,\Banach^*)$, where $\Banach$ is an arbitrary reflexive
Banach space. It is important to note that, in contrast to the papers
\cite{Artamonov, PritchardSalamon}, we do not assume an embedding between
the space $\Banach$ and the dual space.

\textbf{1.1} By $\Lin(\Banach_1,\Banach_2)$ we denote the normed space of
continuous linear operators from a Banach space $\Banach_1$ to a Banach space
$\Banach_2$. Just as in \cite[Part IV]{RepresentationAndControl}, we introduce
the following spaces of operator functions. By
$C_u([a,b];\Lin(\Banach_1,\Banach_2))$
we denote the Banach space of strongly continuous operator functions on
the interval $[a,b]$ ranging in $\Lin(\Banach_1,\Banach_2)$ with the norm
\[
	\|P\|_u=\sup_{t\in[a,b]}\|P(t)\|_{\Lin(\Banach_1,\Banach_2)}.
\]
and by $C_s([a,b];\Lin(\Banach_1,\Banach_2))$ we denote the topological
space of strongly continuous operator functions on $[a,b]$ ranging in
$\Lin(\Banach_1,\Banach_2)$ with the topology of uniform strong convergence.
By definition, $P\in C_s([a,b];\Lin(\Banach_1,\Banach_2))$ if and only if
the vector function $(Px)(t)=P(t)x$ belongs to the Banach space
$C([a,b];\Banach_2)$ for each $x\in\Banach_1$. If
$\Banach_1=\Banach_2=Y$, then we write $C_s([a,b]l;\Lin(Y))$ instead of
$C_s([a,b];\Lin(Y,Y))$. Note that if
$P\in C_s([a,b];\Lin(\Banach_1,\Banach_2))$, then the function
$\|P(\cdot)\|_{\Lin(\Banach_1,\Banach_2)}$ is measurable and bounded and
the function $\|P(\cdot)x\|_{\Banach_2}\in C[a,b]$ for each $x\in\Banach_1$.
By definition, a sequence $\{P_k\}_{k=1}^{+\infty}$ converges to $P$
in the space $C_s([a,b];\Lin(\Banach_1,\Banach_2))$ if and only if
the sequence of vector functions $P_kx$ converges to the vector function
$Px$ uniformly on $[a,b]$ (i.e., $P_kx$ converges to $Px$ in space
$C([a,b];\Banach_2)$) for each $x\in\Banach_1$. A straightforward verification
shows that if $P\in C_s([a,b];\Lin(\Banach_1,\Banach_2))$ and
$Q\in C_s([a,b];\Lin(\Banach_2,\Banach_3))$, then
$QP\in C_s([a,b];\Lin(\Banach_1,\Banach_3))$.

The topological space of strongly continuously differentiable operator functions
$C_s^1([a,b];\Lin(\Banach_1,\Banach_2))$ with the topology of uniform
strong convergence is defined in a similar way. By definition,
$P\in C_s^1([a,b];\Lin(\Banach_1,\Banach_2))$ if and only if the vector function
$Px$ belongs to space $C^1([a,b];\Banach_2)$ for each $x\in\Banach_1$.

Throughout the paper, $\slim$ stands for the limit in the strong operator
topology; for convenience, we denote the interval $\Interval=[0,T]$.

\textbf{1.2} Let $\Banach_{1}$ and $\Banach_{2}$ be Banach spaces,
and let operators $A_{1}\in\Lin(\Banach_{1})$, $A_{2}\in\Lin(\Banach_{2})$,
$C,G\in \Lin(\Banach_1,\Banach_2)$ and $B\in\Lin(\Banach_2,\Banach_1)$ be given.
Since the operators $A_{1}$ and $A_{2}$ are bounded, it follows that they are
the generators of  $C_0$-groups $e^{tA_i}\in\Lin(\Banach_{1,2})$
($t\in\R$), $i=1,2$.

In the collection of spaces $(\Banach_1,\Banach_2)$, consider the autonomous
backward (in time) Riccati differential equation
\[
	P'(t)=-C+A_2P(t)+P(t)A_1-P(t)BP(t)\quad P(T)=G.
\]
on interval $\Interval$. A straightforward verification shows that if
operator function $P\in C^1_s(\Interval;\Lin(\Banach_1,\Banach_2))$ is a solution
of this equation, then the operator function $P$ satisfies
the integral equation
\[
	P(t)=e^{(t-T)A_2}Ge^{(t-T)A_1}+
	\int_t^T e^{(t-r)A_2}(C-P(r)BP(r))e^{(t-r)A_1}dr,
\]
where integral is understood in the strong sense. This equation can be called
an \emph{autonomous Riccati integral equation}.

Let $Y$ be a Banach space.
\begin{definicion}
An operator function $\{U_{t,s}\}_{0\leq s\leq t\leq T}\subset\Lin(Y)$
is called \emph{forward (in time) evolution family} in $\Lin(Y)$ if it has
the following properties:
\begin{enumerate}
	\item The relation $U_{s,s}=\Id_Y$ holds for each $s\in[0,T]$;
	\item The relation $U_{t,s}=U_{t,r}U_{r,s}$ holds for each
	$0\leq s\leq r\leq t\leq T$.
\end{enumerate}
\end{definicion}
\begin{definicion}
An operator function $\{V_{t,s}\}_{0\leq t\leq s\leq T}\subset\Lin(Y)$
is called a \emph{backward (in time) evolution family} in $\Lin(Y)$
 if it has the following properties:
\begin{enumerate}
	\item The relation $V_{s,s}=\Id_Y$ holds for each $s\in[0,T]$;
	\item The relation $V_{t,s}=V_{t,r}V_{r,s}$ holds for each
	$0\leq t\leq r\leq s\leq T$.
\end{enumerate}
\end{definicion}
\begin{remark}
It readily follows from these definitions that if $U_{t,s}$ is a forward
evolution family in  $\Lin(Y)$, then $V_{\tau,\sigma}=U^*_{\sigma,\tau}$ is
a backward evolution family in $\Lin(Y^*)$.
\end{remark}
\begin{definicion}
A (forward or backward) evolution family $U_{t,s}$ is said to be
\emph{strongly continuous} if it is strongly continuous in $t$ and $s$
separately, i.e., strongly continuous in $t$ for each $s$ and in $s$
for each $t$.
\end{definicion}
\begin{remark}
A strongly continuous (forward or backward) evolution family is not necessarily
jointly strongly continuous in $(t,s)$. Moreover, it may not be even uniformly
bounded in the operator norm \cite[Appendix B]{Gibson1979}
\end{remark}
\begin{remark}
In what follows, we conveniently use arrows to indicate forward and
backward evolution families; namely, we write $\overleftarrow{U}_{t,s}$ and $\overrightarrow{U}_{t,s}$ respectively.
\end{remark}

\begin{definicion}
Let $\overleftarrow{U}_{t,s}$ be a strongly continuous forward evolution family
in $\Lin(X_1)$, let $\overrightarrow{U}_{t,s}$ be a strongly continuous
backward evolution family in $\Lin(X_2)$ and assume that
\[
	C\in C_s(\Interval;\Lin(\Banach_1,\Banach_2)),\quad
	B\in C_s(\Interval;\Lin(\Banach_2,\Banach_1)),\quad
	G\in\Lin(\Banach_1,\Banach_2).
\]
The integral equation
\begin{equation}\label{IntegralRiccatiEq}
	P(t)=\overrightarrow{U}_{t,T}G\overleftarrow{U}_{T,t}+
	\int_t^T \overrightarrow{U}_{t,r}\{C(r)-P(r)B(r)P(r)\}\overleftarrow{U}_{r,t}dr
\end{equation}
for an operator function $P\in C_s(\Interval;\Lin(\Banach_1,\Banach_2))$
will be called the \emph{backward (in time) Riccati integral equation}
with the condition $P(T)=G$ in the collection of spaces $(\Banach_1,\Banach_2)$.
The integral is understood in the strong sense.
\end{definicion}
\begin{remark}
It follows from Definition \ref{IntegralRiccatiEq} and the semigroup
property of evolution families that if
$P\in C_s(\Interval;\Lin(\Banach_1,\Banach_2))$ is a solution of the Riccati
integral equation \eqref{IntegralRiccatiEq}, then the relation
\[
	P(t)=\overrightarrow{U}_{t,\tau}P(\tau)\overleftarrow{U}_{\tau,t}+
	\int_t^\tau \overrightarrow{U}_{t,r}\{C(r)-P(r)B(r)P(r)\}\overleftarrow{U}_{r,t}dr
\]
holds for all $0\leq t\leq \tau\leq T$.
\end{remark}

\textbf{1.3} The following result for a Banach space $Y$ is well known
\cite[Theorem 9.19]{EngelNagel}
\begin{teorema}
Let $\overleftarrow{U}_{t,s}$ be a strongly continuous uniformly bounded
forward evolution family in $\Lin(Y)$, and let $Q\in C_s(\Interval;\Lin(Y))$.
Then there exists a unique strongly continuous uniformly
bounded forward evolution family $\overleftarrow{\Psi}_{t,s}$ in $\Lin(Y)$,
satisfying the equations  ($0\leq s\leq t\leq T$)
\begin{align*}
	\overleftarrow{\Psi}_{t,s} &= \overleftarrow{U}_{t,s}+
	\int_s^t \overleftarrow{U}_{t,r}Q(r)\overleftarrow{\Psi}_{r,s}dr \\
	\overleftarrow{\Psi}_{t,s} &= \overleftarrow{U}_{t,s}+
	\int_s^t \overleftarrow{\Psi}_{t,r}Q(r)\overleftarrow{U}_{r,s}dr
\end{align*}
\end{teorema}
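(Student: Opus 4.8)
The plan is to construct $\overleftarrow{\Psi}_{t,s}$ by a Picard-type iteration applied to the first integral equation, then verify the evolution-family properties and the second integral equation, and finally establish uniqueness. Fix $s\in\Interval$ and regard the first equation as a Volterra equation for the operator function $t\mapsto \overleftarrow{\Psi}_{t,s}$ on $[s,T]$ with values in $\Lin(Y)$. First I would set $\Psi^{(0)}_{t,s}=\overleftarrow{U}_{t,s}$ and define inductively $\Psi^{(n+1)}_{t,s}=\overleftarrow{U}_{t,s}+\int_s^t\overleftarrow{U}_{t,r}Q(r)\Psi^{(n)}_{r,s}\,dr$. Since $\overleftarrow{U}_{t,s}$ is uniformly bounded, say $\|\overleftarrow{U}_{t,s}\|\le M$, and $Q$ is strongly continuous hence $\|Q(\cdot)\|$ is bounded by some $N$ (as noted in the excerpt, the operator-norm function of a $C_s$-function is bounded), the successive differences satisfy $\|\Psi^{(n+1)}_{t,s}x-\Psi^{(n)}_{t,s}x\|\le \frac{(MN(t-s))^{n}}{n!}M\cdot(MN)\int_s^t\cdots$, giving a geometrically dominated series that converges uniformly on $\{0\le s\le t\le T\}$ applied to each fixed $x\in Y$; the uniform bound $\|\overleftarrow{\Psi}_{t,s}\|\le Me^{MN T}$ follows from the same estimate. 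Strong continuity in $t$ for fixed $s$ comes from uniform convergence of strongly continuous approximants; strong continuity in $s$ for fixed $t$ requires a parallel iteration argument (or is obtained afterward from the second integral equation).

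Next I would verify the second integral equation. The clean way is to show both the first and second equations are equivalent to the statement that $\overleftarrow{\Psi}_{t,s}x$ solves, for each $x$, the mild/integral form of the "perturbed" nonautonomous problem, and then appeal to uniqueness of that Volterra problem; alternatively, insert the first equation into itself and use Fubini together with the propagator identity for $\overleftarrow{U}$ to interchange the roles of the two kernels. The evolution-family properties are then checked directly: $\overleftarrow{\Psi}_{s,s}=\overleftarrow{U}_{s,s}=\Id_Y$ is immediate, and for $0\le s\le r\le t\le T$ one shows that both $\overleftarrow{\Psi}_{t,s}$ and $\overleftarrow{\Psi}_{t,r}\overleftarrow{\Psi}_{r,s}$ satisfy the same Volterra equation in $t$ on $[r,T]$ with the same data, so they agree by uniqueness; here one uses the semigroup property of $\overleftarrow{U}$ and splits $\int_s^t=\int_s^r+\int_r^t$.

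Uniqueness of $\overleftarrow{\Psi}$ among strongly continuous uniformly bounded forward evolution families satisfying the first equation is a standard Gr\"onwall argument: if $\overleftarrow{\Psi}$ and $\overleftarrow{\Phi}$ are two such, then $\Delta_{t,s}=\overleftarrow{\Psi}_{t,s}-\overleftarrow{\Phi}_{t,s}$ satisfies $\Delta_{t,s}x=\int_s^t\overleftarrow{U}_{t,r}Q(r)\Delta_{r,s}x\,dr$, whence $\|\Delta_{t,s}x\|\le MN\int_s^t\|\Delta_{r,s}x\|\,dr$ and Gr\"onwall forces $\Delta\equiv 0$. I expect the main obstacle to be the bookkeeping around \emph{strong} (not norm) continuity and the fact that the evolution families need not be jointly continuous nor norm-bounded a priori for $Q$: all estimates must be carried out after applying operators to a fixed vector $x$, the strong integrals must be shown to exist (using that $r\mapsto\overleftarrow{U}_{t,r}Q(r)\Psi^{(n)}_{r,s}x$ is continuous, being a composition of strongly continuous maps), and the interchange of limits/integrals in deriving the second equation and the cocycle property must be justified via dominated convergence at the vector level rather than in operator norm. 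Since this is precisely \cite[Theorem 9.19]{EngelNagel}, I would follow that reference's treatment for these technical points.
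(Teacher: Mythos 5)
The paper does not prove this statement at all: it is quoted as a known result from \cite[Theorem 9.19]{EngelNagel}, and your Picard-iteration construction with factorial estimates, the Fubini argument linking the two integral equations, the Volterra-uniqueness check of the cocycle property, and the Gronwall uniqueness step is precisely the standard argument behind that reference. Your sketch is sound; the only points requiring the care you already flag are the strong continuity of $\overleftarrow{\Psi}_{t,s}$ in the second variable (handled by the parallel iteration, i.e.\ checking each term of the series separately in $s$) and the existence of the strong integrals, all carried out on fixed vectors $x\in Y$ rather than in operator norm.
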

A similar result is true for strongly continuous uniformly bounded backward
evolution families.

Let us show that the family $\overleftarrow{\Psi}_{t,s}$ continuously depends
on the operator function $Q$.
\begin{proposition}\label{ContinuousDepOfPhiEvolutionFamily}
Let $\overleftarrow{U}_{t,s}$ be a uniformly bounded strongly continuous
forward evolution family in $\Lin(Y)$, and let a sequence
$\{Q_n\}_{n=1}^{+\infty}$ of operator functions converge to
$Q$ in the space $C_s(\Interval;\Lin(Y))$.
Further, let strongly continuous forward evolution families
$\overleftarrow{\Psi}^{(n)}_{t,s}$ and
$\overleftarrow{\Psi}_{t,s}$ in $\Lin(Y)$ be solutions of
the equations ($0\leq s\leq t\leq T$)
\begin{align*}
	\overleftarrow{\Psi}^{(n)}_{t,s} &= \overleftarrow{U}_{t,s}+
	\int_s^t \overleftarrow{U}_{t,r}Q_n(r)\overleftarrow{\Psi}^{(n)}_{r,s}dr\\
	\overleftarrow{\Psi}_{t,s} &= \overleftarrow{U}_{t,s}+
	\int_s^t \overleftarrow{U}_{t,r}Q(r)\overleftarrow{\Psi}_{r,s}dr.
\end{align*}
Then for each $s\in\Interval$ here exists a limit
$\slim_{n\to+\infty}\overleftarrow{\Psi}^{(n)}_{t,s}=\overleftarrow{\Psi}_{t,s}$
uniformly with respect to $t\in[s,T]$.
\end{proposition}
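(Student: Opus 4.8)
\emph{Proof plan.} The plan is to fix a vector $y\in Y$ and a point $s\in\Interval$, reduce the assertion to a statement about the continuous $Y$-valued functions $\phi_n(t):=\overleftarrow{\Psi}^{(n)}_{t,s}y$ and $\phi(t):=\overleftarrow{\Psi}_{t,s}y$ on $[s,T]$ (continuous by the preceding theorem), and then run a scalar Gronwall argument. Two preliminary remarks set this up. First, since $Q_n\to Q$ in $C_s(\Interval;\Lin(Y))$, the sequence $Q_n(\cdot)z$ is norm-bounded in $C(\Interval;Y)$ for every $z\in Y$, so by the uniform boundedness principle there is $L<\infty$ with $\|Q_n(r)\|\le L$ for all $n$ and all $r\in\Interval$, and, passing to the limit, $\|Q(r)\|\le L$ as well; set also $M:=\sup_{0\le s\le t\le T}\|\overleftarrow{U}_{t,s}\|$. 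Second, applying the two integral identities to $y$ and subtracting, the terms $\overleftarrow{U}_{t,s}y$ cancel and, after inserting $\pm\,\overleftarrow{U}_{t,r}Q_n(r)\phi(r)$ under the integral sign, one obtains
\[
\phi_n(t)-\phi(t)=\int_s^t \overleftarrow{U}_{t,r}Q_n(r)\bigl(\phi_n(r)-\phi(r)\bigr)\,dr+\int_s^t \overleftarrow{U}_{t,r}\bigl(Q_n(r)-Q(r)\bigr)\phi(r)\,dr .
\]

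Next I would take norms. Writing $\varepsilon_n(t):=\|\phi_n(t)-\phi(t)\|$, using $\|\overleftarrow{U}_{t,r}Q_n(r)\|\le ML$ on the first integral, and bounding $\|\overleftarrow{U}_{t,r}\|\le M$ while enlarging the domain of integration to $[s,T]$ in the second, one arrives at
\[
\varepsilon_n(t)\le ML\int_s^t\varepsilon_n(r)\,dr+D_n,\qquad D_n:=M\int_s^T\bigl\|\bigl(Q_n(r)-Q(r)\bigr)\phi(r)\bigr\|\,dr .
\]
Since $\varepsilon_n$ is continuous, Gronwall's inequality gives $\sup_{t\in[s,T]}\varepsilon_n(t)\le D_n e^{ML(T-s)}$; note that $D_n$ does not depend on $t$, which is exactly why this yields the claimed \emph{uniform} convergence. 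Thus everything comes down to showing $D_n\to 0$.

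For $D_n$ I would argue pointwise in $r$: for each fixed $r$ the vector $\phi(r)$ is a fixed element of $Y$, hence $(Q_n(r)-Q(r))\phi(r)\to 0$ as $n\to\infty$ by the strong convergence $Q_n(r)\to Q(r)$, while the integrand is dominated uniformly in $n$ and $r$ by the constant $2L\sup_{r\in[s,T]}\|\phi(r)\|$. The dominated convergence theorem then gives $D_n\to 0$, and combining this with the Gronwall bound shows $\overleftarrow{\Psi}^{(n)}_{t,s}y\to\overleftarrow{\Psi}_{t,s}y$ uniformly in $t\in[s,T]$; since $y\in Y$ is arbitrary, this is precisely convergence in $C_s$. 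The only step requiring a little care is verifying the measurability/integrability of $r\mapsto\|(Q_n(r)-Q(r))\phi(r)\|$ needed to invoke dominated convergence — it is in fact continuous, being the norm of the product of the strongly continuous operator function $Q_n-Q$ with the continuous vector function $\phi$ — together with the attendant fact that the strong integrals obey the usual norm estimate; both are routine within the framework of \textbf{1.1}. I expect this bookkeeping, rather than any genuine analytic difficulty, to be the main obstacle.
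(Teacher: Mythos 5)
Your argument is correct and is essentially the paper's own proof: the same add-and-subtract decomposition of $\overleftarrow{\Psi}^{(n)}_{t,s}y-\overleftarrow{\Psi}_{t,s}y$ into a term driven by $(Q_n-Q)\overleftarrow{\Psi}_{r,s}y$ plus a term amenable to Gronwall, with the uniform boundedness principle supplying the common bound on $\|Q_n(r)\|$, leading to the same estimate $\sup_{t\in[s,T]}\|(\overleftarrow{\Psi}^{(n)}_{t,s}-\overleftarrow{\Psi}_{t,s})y\|\leq \const\cdot\int_s^T\|(Q_n(r)-Q(r))\overleftarrow{\Psi}_{r,s}y\|\,dr$. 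The only (harmless) divergence is in the final step: the paper argues that $Q_n(r)\overleftarrow{\Psi}_{r,s}y\to Q(r)\overleftarrow{\Psi}_{r,s}y$ uniformly in $r$, while you send the integral to zero by dominated convergence using only pointwise convergence in $r$ and the uniform bound — both are valid.
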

\begin{proof}
Let $\|\overleftarrow{U}_{t,s}\|\leq M_U$.
By the uniform boundedness principle, the inequalities
$\|Q_n(t)\|,\|Q(t)\|\leq M_Q$ hold with some constant $M_Q$.
The definition of evolution families implies the relation
\begin{multline*}
	\overleftarrow{\Psi}^{(n)}_{t,s}-\overleftarrow{\Psi}_{t,s}=
	\int_s^t\overleftarrow{U}_{t,r}\left[Q_n(r)\overleftarrow{\Psi}^{(n)}_{r,s}-
	Q(r)\overleftarrow{\Psi}_{r,s}\right]dr=\\
	\int_s^t\overleftarrow{U}_{t,r}[Q_n(r)-Q(r)]\overleftarrow{\Psi}_{r,s}dr+
	\int_s^r\overleftarrow{U}_{t,r}Q_n(r)\left[\overleftarrow{\Psi}^{(n)}_{r,s}-\overleftarrow{\Psi}_{r,s}\right]dr.
\end{multline*}
Hence for an arbitrary $x\in Y$ we obtain
\begin{multline*}
	\|(\overleftarrow{\Psi}^{(n)}_{t,s}-\overleftarrow{\Psi}_{t,s})x\|\leq
	M_U\int_s^t\left\|[Q_n(r)-Q(r)]\overleftarrow{\Psi}_{r,s}x\right\|dr+\\
	\int_s^tM_U M_Q\left\|(\overleftarrow{\Psi}^{(n)}_{r,s}-\overleftarrow{\Psi}_{r,s})x\right\|dr.
\end{multline*}
Since the first integral term is monotone nondecreasing with respect to $t$,
it follows from the Gronwall inequality that
\[
	\left\|(\overleftarrow{\Psi}^{(n)}_{t,s}-\overleftarrow{\Psi}_{t,s})x\right\|\leq
	M_U\exp(M_U M_Q(t-s))\int_s^t\|[Q_n(r)-Q(r)]\overleftarrow{\Psi}_{r,s}x\|dr.
\]
This, together with the uniform strong convergence of the sequence $Q_n$ to $Q$
and the uniform boundedness of the strongly continuous family
$\overleftarrow{\Psi}_{t,s}$, implies that $Q_n(r)\overleftarrow{\Psi}_{r,s}$
strongly converges to $Q(r)\overleftarrow{\Psi}_{r,s}$ uniformly with respect to
$r\in[s,T]$. The proof of the proposition is complete.
\end{proof}
\begin{corolario}\label{ContinuousDepOfPsiEvolutionFamily}
Let $\overrightarrow{U}_{t,s}$ be a uniformly bounded backward evolution family
in $\Lin(Y)$, and let a sequence $\{Q_n\}_{n=1}^{+\infty}$ converge to $Q$
in the space $C_s(\Interval;\Lin(Y))$. Further, let strongly continuous
backward evolution families $\overrightarrow{\Psi}^{(n)}_{t,s}$ and
$\overrightarrow{\Psi}_{t,s}$be solutions of the equations
($0\leq t\leq s\leq T$)
\begin{align*}
	\overrightarrow{\Psi}^{(n)}_{t,s} &= \overrightarrow{U}_{t,s}+
	\int_t^s \overrightarrow{U}_{t,r}Q_n(r)\overrightarrow{\Psi}^{(n)}_{r,s}dr\\
	\overrightarrow{\Psi}_{t,s} &= \overrightarrow{U}_{t,s}+
	\int_t^s \overrightarrow{U}_{t,r}Q(r)\overrightarrow{\Psi}_{r,s}dr.
\end{align*}
Then for each $t\in\Interval$ there exists a limit
$\slim_{n\to+\infty}\overrightarrow{\Psi}^{(n)}_{t,s}=\overrightarrow{\Psi}_{t,s}$
uniformly with respect $s\in[t,T]$.
\end{corolario}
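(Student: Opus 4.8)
The statement is the backward‑in‑time mirror of Proposition~\ref{ContinuousDepOfPhiEvolutionFamily}, so the plan is to reduce it to that proposition by reversing time. First I would introduce the reversed data $\hat U_{a,b}:=\overrightarrow U_{T-a,T-b}$, $\hat Q_n(r):=Q_n(T-r)$, $\hat Q(r):=Q(T-r)$ for $0\le b\le a\le T$, and, likewise, $\hat\Psi^{(n)}_{a,b}:=\overrightarrow\Psi^{(n)}_{T-a,T-b}$ and $\hat\Psi_{a,b}:=\overrightarrow\Psi_{T-a,T-b}$. From the composition rule for evolution families one checks at once that $\hat U_{a,b}$ is again a uniformly bounded, strongly continuous \emph{forward} evolution family in $\Lin(Y)$ with the same norm bound, and that $\hat Q_n\to\hat Q$ in $C_s(\Interval;\Lin(Y))$, since precomposition with the homeomorphism $r\mapsto T-r$ of $\Interval$ preserves uniform strong convergence.

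The second step is the change of variable $\rho=T-r$ in the two integral equations of the corollary. Using $\overrightarrow U_{T-a,r}=\hat U_{a,\rho}$, $\overrightarrow\Psi^{(n)}_{r,T-b}=\hat\Psi^{(n)}_{\rho,b}$, and the like, together with the fact that the change of variable replaces $\int_t^s(\cdot)\,dr$ by $\int_b^a(\cdot)\,d\rho$, the equations take the form
\[
  \hat\Psi^{(n)}_{a,b}=\hat U_{a,b}+\int_b^a \hat U_{a,\rho}\,\hat Q_n(\rho)\,\hat\Psi^{(n)}_{\rho,b}\,d\rho,\qquad
  \hat\Psi_{a,b}=\hat U_{a,b}+\int_b^a \hat U_{a,\rho}\,\hat Q(\rho)\,\hat\Psi_{\rho,b}\,d\rho,
\]
which are precisely the equations governed by Proposition~\ref{ContinuousDepOfPhiEvolutionFamily}. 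Applying that proposition to $\hat U,\hat Q_n,\hat Q$ yields $\slim_{n}\hat\Psi^{(n)}_{a,b}=\hat\Psi_{a,b}$, uniform in the free index for each fixed value of the other, and translating the conclusion back through $a=T-t$, $b=T-s$ gives the claimed convergence of $\overrightarrow\Psi^{(n)}_{t,s}$ to $\overrightarrow\Psi_{t,s}$.

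Equivalently, one may repeat the proof of Proposition~\ref{ContinuousDepOfPhiEvolutionFamily} verbatim in the backward setting: subtract the two integral equations, apply both sides to an arbitrary $x\in Y$, estimate with $\|\overrightarrow U_{t,r}\|\le M_U$ and $\|Q_n(r)\|,\|Q(r)\|\le M_Q$ (the latter bound by the uniform boundedness principle), invoke Gronwall's inequality, and use the strong continuity and uniform boundedness of $r\mapsto\overrightarrow\Psi_{r,s}$ to obtain that $Q_n(r)\overrightarrow\Psi_{r,s}\to Q(r)\overrightarrow\Psi_{r,s}$ uniformly in $r$. I do not anticipate a genuine obstacle, since this is in essence a corollary; the points that need attention are purely bookkeeping ones — keeping track of which index is held fixed and which is allowed to vary (time reversal interchanges them), applying Gronwall's inequality with the correct orientation of the integral (whose variable endpoint is now the lower one), and, if one wants the conclusion exactly as phrased with $t$ held fixed, running the estimate on the second form of the integral equation $\overrightarrow\Psi_{t,s}=\overrightarrow U_{t,s}+\int_t^s\overrightarrow\Psi_{t,r}Q(r)\overrightarrow U_{r,s}\,dr$, which is available from the backward analogue of the theorem quoted just before Proposition~\ref{ContinuousDepOfPhiEvolutionFamily}.
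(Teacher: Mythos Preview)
Your proposal is correct and is exactly what the paper has in mind: the corollary is stated immediately after Proposition~\ref{ContinuousDepOfPhiEvolutionFamily} with no proof at all, the author clearly regarding it as the routine backward-in-time analogue, and your time-reversal reduction (or, equivalently, the verbatim repetition of the Gronwall estimate) is the natural way to fill in the omitted argument. You have also correctly flagged the one bookkeeping point that deserves care --- that time reversal exchanges which index is held fixed --- and your remedy of passing to the second integral form (available by the backward version of Theorem~1) is the right move; just note that in that form the difference $\overrightarrow{\Psi}^{(n)}_{t,r}-\overrightarrow{\Psi}_{t,r}$ acts on the $s$-dependent vector $Q_n(r)\overrightarrow{U}_{r,s}x$, so the cleanest way to close the estimate is via the Duhamel-type identity
\[
\overrightarrow{\Psi}^{(n)}_{t,s}-\overrightarrow{\Psi}_{t,s}
=\int_t^s \overrightarrow{\Psi}^{(n)}_{t,r}\bigl[Q_n(r)-Q(r)\bigr]\overrightarrow{\Psi}_{r,s}\,dr,
\]
which follows from combining the two forms and yields the desired uniformity in $s$ after bounding $\|\overrightarrow{\Psi}^{(n)}_{t,r}\|$ uniformly.
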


\section{Representation of the solution of the Riccati equation}

\textbf{2.1.} We will need the following results on the form of solutions of
integral equations. Just as before, let $\Banach_1$ and $\Banach_2$ be
be Banach spaces, and let the following conditions be satisfied:
\begin{enumerate}
	\item\label{ForwardEvolutionFamilyU} $\overleftarrow{U}_{t,s}$ is a strongly
	continuous uniformly bounded forward evolution family in
	$\Lin(\Banach_1)$;
	\item\label{BackwardEvolutionFamilyU}   $\overrightarrow{U}_{t,s}$
	is a strongly continuous uniformly bounded backward evolution family in
	$\Lin(\Banach_2)$;
	\item\label{FunctionsQ} The operator functions $Q_{12}$, $Q_1$ and $Q_2$
	satisfy the inclusions
	\[
		Q_{12}\in C_s(\Interval; \Lin(\Banach_1,\Banach_2)),\quad
		Q_1\in C_s(\Interval; \Lin(\Banach_1)),\quad
		Q_2\in C_s(\Interval; \Lin(\Banach_2)).
	\]
\end{enumerate}
\begin{proposition}\label{AuxIntergralEquationProposition1}
Let conditions \ref{ForwardEvolutionFamilyU}, \ref{BackwardEvolutionFamilyU}
and \ref{FunctionsQ} be satisfied, and let a strongly continuous uniformly
bounded forward evolution family
$\{\overleftarrow{\Omega}_{t,s}\}_{0\leq s\leq t\leq T}$ in $\Lin(\Banach_1)$
be the unique solution of the equation
\[ 
	\overleftarrow{\Omega}_{t,s}=\overleftarrow{U}_{t,s}-
	\int_s^t \overleftarrow{\Omega}_{t,r}Q_1(r)\overleftarrow{U}_{r,s}dr,\quad
	0\leq s\leq t\leq T.
\]
Then for an arbitrary $G\in\Lin(\Banach_1,\Banach_2)$ the equation
\begin{equation}\label{AuxIntergralEquation1}
	P(t)=\overrightarrow{U}_{t,T}G\overleftarrow{U}_{T,t}+
	\int_t^T\overrightarrow{U}_{t,r}[Q_{12}(r)-P(r)Q_1(r)]\overleftarrow{U}_{r,t}dr,
\end{equation}
for an operator function $P\in C_s(\Interval; \Lin(\Banach_1,\Banach_2))$
has the unique solution
\begin{equation}\label{AuxIntergralEquation1Solution}
	P(t)=\overrightarrow{U}_{t,T}G\overleftarrow{\Omega}_{T,t}+
	\int_t^T\overrightarrow{U}_{t,r}Q_{12}(r)\overleftarrow{\Omega}_{r,t}dr,\quad
	t\in\Interval.
\end{equation}
\end{proposition}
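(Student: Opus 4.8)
The plan is to establish the proposition in three steps. The conceptual point is that the term $-P(r)Q_{1}(r)$ in \eqref{AuxIntergralEquation1} is a right feedback acting on the forward family $\overleftarrow{U}$, and absorbing it amounts to replacing $\overleftarrow{U}$ by the perturbed family $\overleftarrow{\Omega}$; formula \eqref{AuxIntergralEquation1Solution} is the corresponding variation-of-parameters expression, so one mainly has to verify that it works. First I would check that the right-hand side of \eqref{AuxIntergralEquation1Solution} is a well-defined element of $C_{s}(\Interval;\Lin(\Banach_{1},\Banach_{2}))$. Then I would substitute \eqref{AuxIntergralEquation1Solution} into \eqref{AuxIntergralEquation1} and verify the identity. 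Finally I would derive uniqueness from a Gronwall estimate.

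For the first step, fix $x\in\Banach_{1}$ and apply \eqref{AuxIntergralEquation1Solution} to $x$, reading the strong integral as a Bochner integral in $\Banach_{2}$. The uniform boundedness principle gives bounds $\|\overrightarrow{U}_{t,r}\|\le M$, $\|\overleftarrow{\Omega}_{r,t}\|\le M$ and $\|Q_{12}(r)\|\le M_{12}$; combined with the separate strong continuity of $\overrightarrow{U}$ and $\overleftarrow{\Omega}$, the strong continuity of products of strongly continuous operator functions, and dominated convergence (to absorb the moving lower limit of integration and the dependence of the integrand on $t$ through both $\overrightarrow{U}_{t,r}$ and $\overleftarrow{\Omega}_{r,t}$), this shows $t\mapsto P(t)x$ is continuous, hence $P\in C_{s}(\Interval;\Lin(\Banach_{1},\Banach_{2}))$.

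The second step is the heart of the matter. Denote by $P$ the operator function \eqref{AuxIntergralEquation1Solution}. For $t\le r\le T$ the backward semigroup identities $\overrightarrow{U}_{t,r}\overrightarrow{U}_{r,T}=\overrightarrow{U}_{t,T}$ and $\overrightarrow{U}_{t,r}\overrightarrow{U}_{r,\sigma}=\overrightarrow{U}_{t,\sigma}$ (for $r\le\sigma\le T$) give
\[
	\overrightarrow{U}_{t,r}P(r)=\overrightarrow{U}_{t,T}G\overleftarrow{\Omega}_{T,r}
	+\int_{r}^{T}\overrightarrow{U}_{t,\sigma}Q_{12}(\sigma)\overleftarrow{\Omega}_{\sigma,r}\,d\sigma .
\]
Multiplying on the right by $Q_{1}(r)\overleftarrow{U}_{r,t}$, integrating in $r$ over $[t,T]$ and interchanging the order of integration in the iterated integral over the simplex $\{t\le r\le\sigma\le T\}$ — legitimate since, after pairing with a vector, all integrands are continuous, hence Bochner measurable and bounded — yields
\[
	\int_{t}^{T}\overrightarrow{U}_{t,r}P(r)Q_{1}(r)\overleftarrow{U}_{r,t}\,dr
	=\overrightarrow{U}_{t,T}G\,J_{T,t}
	+\int_{t}^{T}\overrightarrow{U}_{t,\sigma}Q_{12}(\sigma)\,J_{\sigma,t}\,d\sigma ,
\]
where $J_{\tau,t}:=\int_{t}^{\tau}\overleftarrow{\Omega}_{\tau,r}Q_{1}(r)\overleftarrow{U}_{r,t}\,dr$. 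By the defining equation of $\overleftarrow{\Omega}$ one has $J_{\tau,t}=\overleftarrow{U}_{\tau,t}-\overleftarrow{\Omega}_{\tau,t}$. Inserting this into the right-hand side of \eqref{AuxIntergralEquation1} and cancelling the $\overleftarrow{U}$-terms against $\overrightarrow{U}_{t,T}G\overleftarrow{U}_{T,t}+\int_{t}^{T}\overrightarrow{U}_{t,r}Q_{12}(r)\overleftarrow{U}_{r,t}\,dr$ leaves exactly $\overrightarrow{U}_{t,T}G\overleftarrow{\Omega}_{T,t}+\int_{t}^{T}\overrightarrow{U}_{t,r}Q_{12}(r)\overleftarrow{\Omega}_{r,t}\,dr=P(t)$, which is precisely the assertion that \eqref{AuxIntergralEquation1Solution} solves \eqref{AuxIntergralEquation1}.

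For uniqueness, if $P_{1}$ and $P_{2}$ both solve \eqref{AuxIntergralEquation1}, their difference $D=P_{1}-P_{2}$ satisfies $D(t)=-\int_{t}^{T}\overrightarrow{U}_{t,r}D(r)Q_{1}(r)\overleftarrow{U}_{r,t}\,dr$, so $\|D(t)x\|\le M^{2}M_{Q_{1}}\int_{t}^{T}\|D(r)x\|\,dr$ for every $x\in\Banach_{1}$, and the backward Gronwall inequality forces $D\equiv0$. The step I expect to be the main obstacle is the second one: one must keep the three composition directions of $\overrightarrow{U}$, $\overleftarrow{U}$ and $\overleftarrow{\Omega}$ mutually consistent, arrange the double integral so that Fubini reproduces exactly the combination $\overleftarrow{U}_{\tau,t}-\overleftarrow{\Omega}_{\tau,t}$ for the right index pairs, and justify the interchange for operator integrals understood in the strong sense; once the indices are aligned, the cancellation is automatic.
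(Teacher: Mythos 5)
Your verification step is essentially the paper's own argument: substitute the candidate \eqref{AuxIntergralEquation1Solution} into the right-hand side of \eqref{AuxIntergralEquation1}, use the semigroup property of $\overrightarrow{U}_{t,s}$, interchange the order of integration over the simplex $\{t\le r\le\sigma\le T\}$, and invoke the defining equation of $\overleftarrow{\Omega}$ to collapse $\overleftarrow{U}_{\tau,t}-\int_t^{\tau}\overleftarrow{\Omega}_{\tau,r}Q_1(r)\overleftarrow{U}_{r,t}\,dr$ into $\overleftarrow{\Omega}_{\tau,t}$; your bookkeeping via $J_{\tau,t}$ is just a cleaner organization of the same chain of identities. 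Where you genuinely diverge is uniqueness: the paper observes that the affine map $F$ is a contraction on $C_u([\tau-\delta,\tau];\Lin(\Banach_1,\Banach_2))$ for small $\delta$ and steps backward through the interval, while you run a global backward Gronwall estimate on the difference $D=P_1-P_2$; since the equation is linear in $P$, Gronwall is perfectly adequate and arguably more direct. One slip there, though: the inequality $\|D(t)x\|\le M^2M_{Q_1}\int_t^T\|D(r)x\|\,dr$ is not correct as written, because inside the integral $D(r)$ acts on the vector $Q_1(r)\overleftarrow{U}_{r,t}x$, which varies with $r$ and $t$, not on $x$ itself. The repair is standard: estimate $\|D(t)x\|\le M^2M_{Q_1}\|x\|\int_t^T\|D(r)\|_{\Lin(\Banach_1,\Banach_2)}\,dr$, take the supremum over $\|x\|=1$, and apply Gronwall to the function $\|D(\cdot)\|_{\Lin(\Banach_1,\Banach_2)}$, which is bounded and measurable for a $C_s$ operator function (as the paper notes in Section 1.1). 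With that correction your argument is complete.
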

\begin{proof}
Let us substitute the operator function \eqref{AuxIntergralEquation1Solution}
into the right-hand side of Eq. \eqref{AuxIntergralEquation1}.
Taking into account the semigroup property of $\overrightarrow{U}_{t,s}$, and
the definition of the family $\overleftarrow{\Omega}_{t,s}$ and changing
the order of integration, we obtain
\begin{multline*}
	\overrightarrow{U}_{t,T}G\overleftarrow{U}_{T,t}+
	\int_t^T\overrightarrow{U}_{t,r}[Q_{12}(r)-P(r)Q_1(r)]\overleftarrow{U}_{r,t}dr=
	\overrightarrow{U}_{t,T}G\overleftarrow{U}_{T,t}+\\
	\int_t^T\overrightarrow{U}_{t,r}\left[Q_{12}(r)-\left[\overrightarrow{U}_{r,T}G\overleftarrow{\Omega}_{T,r}+
	\int_r^T \overrightarrow{U}_{r,s}Q_{12}(s)\overleftarrow{\Omega}_{s,r}ds\right]Q_1(r)\right]\overleftarrow{U}_{r,t}dr=\\
	\overrightarrow{U}_{t,T}G\left[\overleftarrow{U}_{T,t}-
	\int_t^T\overleftarrow{\Omega}_{T,r}Q_1(r)\overleftarrow{U}_{T,r}dr\right]+
	\int_t^T\overrightarrow{U}_{t,r}Q_{12}(r)\overleftarrow{U}_{r,t}dr-\\
	\int_t^T\left[\int_r^T \overrightarrow{U}_{t,s}Q_{12}(s)\overleftarrow{\Omega}_{s,r}Q_1(r)\overleftarrow{U}_{r,t}ds\right]dr=
	\overrightarrow{U}_{t,T}G\overleftarrow{\Omega}_{T,t}+\\
	\int_t^T\overrightarrow{U}_{t,s}Q_{12}(s)\overleftarrow{U}_{s,t}ds-
	\int_t^T\left[\int_t^s \overrightarrow{U}_{t,s}Q_{12}(s)\overleftarrow{\Omega}_{s,r}Q_1(r)\overleftarrow{U}_{r,t}dr\right]ds=\\
	\overrightarrow{U}_{t,T}G\overleftarrow{\Omega}_{T,t}+
	\int_t^T\overrightarrow{U}_{t,s}Q_{12}(s)\left[\overleftarrow{U}_{s,t}-
	\int_t^s\overleftarrow{\Omega}_{s,r}Q_1(r)\overleftarrow{U}_{r,t}dr\right]ds=\\
	\overrightarrow{U}_{t,T}G\overleftarrow{\Omega}_{T,t}+
	\int_t^T \overrightarrow{U}_{t,s}Q_{12}(s)\overleftarrow{\Omega}_{s,t}ds=P(t).
\end{multline*}
Thus, the operator function \eqref{AuxIntergralEquation1Solution}
satisfies Eq. \eqref{AuxIntergralEquation1}.

The uniqueness of the solution of Eq. \eqref{AuxIntergralEquation1}
follows from the fact that, for sufficiently small $\delta>0$
the mapping $F$ acting by the rule
\[
	F(P)(t)=\overrightarrow{U}_{t,\tau}G\overleftarrow{U}_{\tau,t}+
	\int_t^{\tau}\overrightarrow{U}_{t,r}[Q_{12}(r)-P(r)Q_1(r)]\overleftarrow{U}_{r,t}dr
\]
is a contraction on the space $C_u([\tau-\delta,\tau];\Lin(X_1,X_2))$ for all
$\delta\leq \tau\leq T$. The proof of the proposition is complete.
\end{proof}
In a similar way, one can prove the following proposition.
\begin{proposition}\label{AuxIntergralEquationProposition2}
Let conditions \ref{ForwardEvolutionFamilyU}, \ref{BackwardEvolutionFamilyU} and
\ref{FunctionsQ} be satisfied, and let a strongly continuous uniformly bounded
backward evolution family $\overrightarrow{\Omega}_{t,s}$ in  $\Lin(\Banach_2)$
be the unique solution of the equation
\[ 
	\overrightarrow{\Omega}_{t,s}=\overrightarrow{U}_{t,s}-
	\int_t^s \overrightarrow{U}_{t,r}Q_2(r)\overrightarrow{\Omega}_{r,s}dr,\quad
	0\leq t\leq s\leq T.
\]
Then for an arbitrary $G\in\Lin(\Banach_1,\Banach_2)$ the equation
\begin{equation}\label{AuxIntergralEquation2}
	P(t)=\overrightarrow{U}_{t,T}G\overleftarrow{U}_{T,t}+
	\int_t^T\overrightarrow{U}_{t,r}[Q_{12}(r)-Q_2(r)P(r)]\overleftarrow{U}_{r,t}dr
\end{equation}
for an operator function $P\in C_s(\Interval;\Lin(\Banach_1,\Banach_2))$
has the unique solution
\begin{equation}\label{AuxIntergralEquation2Solution}
	P(t)=\overrightarrow{\Omega}_{t,T}G \overleftarrow{U}_{T,t}+
	\int_t^T\overrightarrow{\Omega}_{t,r}Q_{12}(r)\overleftarrow{U}_{r,t}dr.
\end{equation}
\end{proposition}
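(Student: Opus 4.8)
The plan is to mirror the proof of Proposition \ref{AuxIntergralEquationProposition1}, exploiting the symmetry between the two situations: there the perturbation $-P(r)Q_1(r)$ sits on the right of $P$ and is absorbed into the \emph{forward} family $\overleftarrow{U}_{r,t}$, turning it into $\overleftarrow{\Omega}_{r,t}$; here the perturbation $-Q_2(r)P(r)$ sits on the left of $P$ and should be absorbed into the \emph{backward} family $\overrightarrow{U}_{t,r}$, turning it into $\overrightarrow{\Omega}_{t,r}$. So the first step is to substitute the candidate solution \eqref{AuxIntergralEquation2Solution} into the right-hand side of \eqref{AuxIntergralEquation2} and verify it reproduces $P(t)$.

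Concretely, I would insert $P(r)=\overrightarrow{\Omega}_{r,T}G\overleftarrow{U}_{T,r}+\int_r^T\overrightarrow{\Omega}_{r,s}Q_{12}(s)\overleftarrow{U}_{s,r}ds$ into the term $-\int_t^T\overrightarrow{U}_{t,r}Q_2(r)P(r)\overleftarrow{U}_{r,t}dr$, use the semigroup property $\overleftarrow{U}_{T,r}\overleftarrow{U}_{r,t}=\overleftarrow{U}_{T,t}$ and $\overleftarrow{U}_{s,r}\overleftarrow{U}_{r,t}=\overleftarrow{U}_{s,t}$ to collapse the trailing forward factors, then group the two terms that carry $G\overleftarrow{U}_{T,t}$ on the right. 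Those two terms are $\overrightarrow{U}_{t,T}G\overleftarrow{U}_{T,t}-\int_t^T\overrightarrow{U}_{t,r}Q_2(r)\overrightarrow{\Omega}_{r,T}dr\,G\overleftarrow{U}_{T,t}$, and by the defining equation $\overrightarrow{\Omega}_{t,T}=\overrightarrow{U}_{t,T}-\int_t^T\overrightarrow{U}_{t,r}Q_2(r)\overrightarrow{\Omega}_{r,T}dr$ this is exactly $\overrightarrow{\Omega}_{t,T}G\overleftarrow{U}_{T,t}$. For the remaining double integral, swap the order of integration over the triangle $\{t\le r\le s\le T\}$, so that the inner variable becomes $r$ ranging in $[t,s]$, giving $-\int_t^T\left[\int_t^s\overrightarrow{U}_{t,r}Q_2(r)\overrightarrow{\Omega}_{r,s}dr\right]Q_{12}(s)\overleftarrow{U}_{s,t}ds$; combining with $\int_t^T\overrightarrow{U}_{t,s}Q_{12}(s)\overleftarrow{U}_{s,t}ds$ and again invoking the $\overrightarrow{\Omega}$-equation in the form $\overrightarrow{\Omega}_{t,s}=\overrightarrow{U}_{t,s}-\int_t^s\overrightarrow{U}_{t,r}Q_2(r)\overrightarrow{\Omega}_{r,s}dr$ yields $\int_t^T\overrightarrow{\Omega}_{t,s}Q_{12}(s)\overleftarrow{U}_{s,t}ds$. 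Adding the two pieces gives precisely \eqref{AuxIntergralEquation2Solution}, i.e. $P(t)$.

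For uniqueness I would repeat the contraction-mapping argument from the previous proof: for sufficiently small $\delta>0$, the map $F(P)(t)=\overrightarrow{U}_{t,\tau}G\overleftarrow{U}_{\tau,t}+\int_t^\tau\overrightarrow{U}_{t,r}[Q_{12}(r)-Q_2(r)P(r)]\overleftarrow{U}_{r,t}dr$ is a contraction on $C_u([\tau-\delta,\tau];\Lin(\Banach_1,\Banach_2))$ because $\|F(P_1)(t)-F(P_2)(t)\|\le M_U^2 M_{Q_2}\,\delta\,\|P_1-P_2\|_u$ with $\delta$ chosen so that $M_U^2 M_{Q_2}\delta<1$; since $\delta$ is independent of $\tau$, patching the unique local solutions over $[T-\delta,T],[T-2\delta,T-2\delta+\delta],\dots$ using the semigroup identity of the Remark following Definition \eqref{IntegralRiccatiEq} gives global uniqueness on $\Interval$.

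The only step requiring genuine care is the Fubini interchange for the iterated strong integral: one must check that the integrand $(r,s)\mapsto\overrightarrow{U}_{t,r}Q_2(r)\overrightarrow{\Omega}_{r,s}Q_{12}(s)\overleftarrow{U}_{s,t}x$ is, for fixed $x$, jointly measurable and dominated on the triangle, which follows from the separate strong continuity of the evolution families, the strong continuity (hence strong measurability) of $Q_2,Q_{12}$, and their uniform operator-norm bounds supplied by the uniform boundedness principle as in Proposition \ref{ContinuousDepOfPhiEvolutionFamily}; the rest is the same bookkeeping with the semigroup property that already appears in the proof of Proposition \ref{AuxIntergralEquationProposition1}.
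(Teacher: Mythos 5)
Your proposal is correct and follows exactly the route the paper intends: the paper proves Proposition \ref{AuxIntergralEquationProposition2} ``in a similar way'' to Proposition \ref{AuxIntergralEquationProposition1}, i.e.\ by substituting the candidate \eqref{AuxIntergralEquation2Solution} into \eqref{AuxIntergralEquation2}, using the semigroup property, the defining equation for $\overrightarrow{\Omega}_{t,s}$ and an interchange of the order of integration, and then obtaining uniqueness via a contraction argument on small subintervals $[\tau-\delta,\tau]$ — which is precisely what you carry out.
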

\begin{proposition}\label{AuxIntergralEquationProposition3}
Let conditions \ref{ForwardEvolutionFamilyU}, \ref{BackwardEvolutionFamilyU} and
\ref{FunctionsQ} be satisfied, and let strongly continuous uniformly bounded
(forward and backward) evolution families $\overleftarrow{\Omega}_{t,s}$ and
$\overrightarrow{\Omega}_{t,s}$ be defined in the same way as in Propositions
\ref{AuxIntergralEquationProposition1} and \ref{AuxIntergralEquationProposition2}, respectively.
Then for an arbitrary $G\in\Lin(\Banach_1,\Banach_2)$ the equation
\begin{equation}\label{AuxIntergralEquation3}
	P(t)=\overrightarrow{U}_{t,T}G\overleftarrow{U}_{T,t}+
	\int_t^T\overrightarrow{U}_{t,r}[Q_{12}(r)-P(r)Q_1(r)-
  Q_2(r)P(r)]\overleftarrow{U}_{r,t}dr
\end{equation}
for an operator function $P\in C_s(\Interval; \Lin(\Banach_1,\Banach_2))$
has the unique solution
\begin{equation}\label{AuxIntergralEquation3Solution}
	P(t)=\overrightarrow{\Omega}_{t,T}G \overleftarrow{\Omega}_{T,t}+
	\int_t^T\overrightarrow{\Omega}_{t,r}Q_{12}(r)\overleftarrow{\Omega}_{r,t}dr.
\end{equation}
\end{proposition}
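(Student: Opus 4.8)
The plan is to reduce Proposition \ref{AuxIntergralEquationProposition3} to the two preceding propositions by absorbing the two linear terms $-P(r)Q_1(r)$ and $-Q_2(r)P(r)$ one at a time. The point is that if a candidate $P\in C_s(\Interval;\Lin(\Banach_1,\Banach_2))$ is given, then $R:=Q_{12}-PQ_1$ again belongs to $C_s(\Interval;\Lin(\Banach_1,\Banach_2))$, so Propositions \ref{AuxIntergralEquationProposition1} and \ref{AuxIntergralEquationProposition2} become applicable with $Q_{12}$ replaced by this $P$-dependent function $R$.

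First I would establish the equivalence: an operator function $P\in C_s(\Interval;\Lin(\Banach_1,\Banach_2))$ solves \eqref{AuxIntergralEquation3} if and only if it solves
\begin{equation*}
	P(t)=\overrightarrow{\Omega}_{t,T}G\overleftarrow{U}_{T,t}+\int_t^T\overrightarrow{\Omega}_{t,r}[Q_{12}(r)-P(r)Q_1(r)]\overleftarrow{U}_{r,t}\,dr. \tag{$\star$}
\end{equation*}
For the forward implication, take a solution $P$ of \eqref{AuxIntergralEquation3}, set $R:=Q_{12}-PQ_1$, and note that \eqref{AuxIntergralEquation3} then becomes exactly Eq. \eqref{AuxIntergralEquation2} with $Q_{12}$ replaced by $R$; by the uniqueness in Proposition \ref{AuxIntergralEquationProposition2} the function $P$ must coincide with the formula \eqref{AuxIntergralEquation2Solution} for this $R$, which is precisely $(\star)$. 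Conversely, if $P$ solves $(\star)$, then with the same $R$ the function $P$ equals formula \eqref{AuxIntergralEquation2Solution}; since by Proposition \ref{AuxIntergralEquationProposition2} that formula is a solution of Eq. \eqref{AuxIntergralEquation2} (with $Q_{12}$ replaced by $R$), $P$ satisfies \eqref{AuxIntergralEquation2} for $R$, i.e. it satisfies \eqref{AuxIntergralEquation3}.

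Second, I would observe that $(\star)$ is nothing but Eq. \eqref{AuxIntergralEquation1} of Proposition \ref{AuxIntergralEquationProposition1} with the backward evolution family $\overrightarrow{U}$ replaced by $\overrightarrow{\Omega}$. All hypotheses are still in force: $\overrightarrow{\Omega}_{t,s}$ is a strongly continuous uniformly bounded backward evolution family in $\Lin(\Banach_2)$ (this is how it is produced in Proposition \ref{AuxIntergralEquationProposition2}), the data $Q_{12},Q_1,Q_2$ are unchanged, and the auxiliary family $\overleftarrow{\Omega}_{t,s}$ occurring in Proposition \ref{AuxIntergralEquationProposition1} depends only on $\overleftarrow{U}$ and $Q_1$, so it is the same family as in the statement of Proposition \ref{AuxIntergralEquationProposition3}. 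Hence Proposition \ref{AuxIntergralEquationProposition1} gives that $(\star)$ has the unique solution obtained from \eqref{AuxIntergralEquation1Solution} by replacing $\overrightarrow{U}$ with $\overrightarrow{\Omega}$, which is exactly \eqref{AuxIntergralEquation3Solution}. Combining this with the equivalence from the first step yields existence and uniqueness for \eqref{AuxIntergralEquation3}.

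The only delicate point is the ``freezing'' argument in the first step: one must check that replacing $Q_{12}$ by the $P$-dependent operator function $R=Q_{12}-PQ_1$ and then invoking the uniqueness in Proposition \ref{AuxIntergralEquationProposition2} genuinely produces an equivalence of the two fixed-point problems, not merely one inclusion — which is why both implications are spelled out above. A more computational alternative, which I expect to be less clean, is to substitute \eqref{AuxIntergralEquation3Solution} directly into the right-hand side of \eqref{AuxIntergralEquation3} and verify the identity by two applications of Fubini's theorem together with the defining equations of $\overleftarrow{\Omega}$ and $\overrightarrow{\Omega}$, exactly as in the proof of Proposition \ref{AuxIntergralEquationProposition1}, with uniqueness then coming, as there, from a contraction-mapping argument on sufficiently short subintervals $[\tau-\delta,\tau]$.
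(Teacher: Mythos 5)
Your proof is correct and follows essentially the same route as the paper: there, too, the term $P(r)Q_1(r)$ is frozen into $\tilde Q_{12}=Q_{12}-PQ_1$, Proposition \ref{AuxIntergralEquationProposition2} is used to pass to the intermediate equation with $\overrightarrow{\Omega}$ in place of $\overrightarrow{U}$, and then Proposition \ref{AuxIntergralEquationProposition1} is applied with that replacement to obtain \eqref{AuxIntergralEquation3Solution}. Your explicit two-way equivalence only spells out what the paper's terser argument leaves implicit (in particular, that the representation formula actually solves \eqref{AuxIntergralEquation3}, not merely that any solution must coincide with it).
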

\begin{proof}
Set $\tilde{Q}_{12}(t)=Q_{12}(t)-P(t)Q_1(t)$. Then Eq.
\eqref{AuxIntergralEquation3} can be written in the form
\[
	P(t)=\overrightarrow{U}_{t,T}G\overleftarrow{U}_{T,t}+
	\int_t^T\overrightarrow{U}_{t,r}[\tilde{Q}_{12}(r)-Q_2(r)P(r)]\overleftarrow{U}_{r,t}dr.
\]
By Proposition \ref{AuxIntergralEquationProposition2}, its solution has the form
\[
	P(t)=\overrightarrow{\Omega}_{t,T}G \overleftarrow{U}_{T,t}+
	\int_t^T\overrightarrow{\Omega}_{t,r}\tilde{Q}_{12}(r)\overleftarrow{U}_{r,t}dr
\]
i.e., the operator function $P$ satisfies the equation
\[
	P(t)=\overrightarrow{\Omega}_{t,T}G \overleftarrow{U}_{T,t}+
	\int_t^T\overrightarrow{\Omega}_{t,r}(Q_{12}(r)-P(r)Q_1(r))\overleftarrow{U}_{r,t}dr.
\]
By Proposition \ref{AuxIntergralEquationProposition1} the unique solution of
this equation (and hence of Eq. \eqref{AuxIntergralEquation3}) has the form
\eqref{AuxIntergralEquation3Solution}
\end{proof}

\textbf{2.2.} Let us prove some results on the representation of the solution of
the Riccati integral equation \eqref{IntegralRiccatiEq}.
\begin{proposition}\label{RiccatiEqSolutionRepresenation1}
Let $P\in C_s(\Interval;\Lin(\Banach_1,\Banach_2))$, and let an evolution family
$\overleftarrow{\Psi}_{t,s}$ in $\Lin(\Banach_1)$ be a solution of the equation
\begin{equation}\label{PhiFamilityDefinition}
	\overleftarrow{\Psi}_{t,s}=\overleftarrow{U}_{t,s}-
	\int_s^t\overleftarrow{\Psi}_{t,r}B(r)P(r)\overleftarrow{U}_{r,s}dr
	\quad 0\leq s\leq t\leq T.
\end{equation}
The operator function $P$ is a solution of the Riccati integral equation
\eqref{IntegralRiccatiEq} if and only if
\begin{equation}\label{MildSolutionRepresentation1}
	P(t)=\overrightarrow{U}_{t,T}G\overleftarrow{\Psi}_{T,t}+
	\int_t^T \overrightarrow{U}_{t,r}C(r)\overleftarrow{\Psi}_{r,t}dr
\end{equation}
for each $t\in\Interval$.
\end{proposition}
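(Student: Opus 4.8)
The plan is to recognize Eq.~\eqref{IntegralRiccatiEq} as an instance of the linear auxiliary equation treated in Proposition~\ref{AuxIntergralEquationProposition1}, with the nonlinear term frozen along the (given) solution $P$. Concretely, I would set $Q_1(r)=B(r)P(r)$, which belongs to $C_s(\Interval;\Lin(\Banach_1))$ because $B\in C_s(\Interval;\Lin(\Banach_2,\Banach_1))$, $P\in C_s(\Interval;\Lin(\Banach_1,\Banach_2))$, and the composition of strongly continuous operator functions is strongly continuous (as noted in \textbf{1.1}); set $Q_{12}(r)=C(r)$; and take $Q_2\equiv 0$. With these choices the family $\overleftarrow{\Omega}_{t,s}$ from Proposition~\ref{AuxIntergralEquationProposition1} is exactly the family $\overleftarrow{\Psi}_{t,s}$ defined by \eqref{PhiFamilityDefinition}, since \eqref{PhiFamilityDefinition} is literally the defining equation for $\overleftarrow{\Omega}_{t,s}$ with $Q_1(r)=B(r)P(r)$. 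One should first check that \eqref{PhiFamilityDefinition} does have a unique strongly continuous uniformly bounded solution; this follows from the cited Theorem~9.19 of \cite{EngelNagel} (applied with $Q(r)=-B(r)P(r)$) together with the first of the two integral identities stated there, which gives the ``$\overleftarrow{\Psi}_{t,r}Q(r)\overleftarrow{U}_{r,s}$'' form appearing in \eqref{PhiFamilityDefinition}.

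Granting that, the two directions of the equivalence are immediate. First suppose $P$ solves the Riccati integral equation \eqref{IntegralRiccatiEq}. Rewrite the integrand $C(r)-P(r)B(r)P(r)$ as $Q_{12}(r)-P(r)Q_1(r)$ with the above choices; then $P$ satisfies \eqref{AuxIntergralEquation1} verbatim. By the uniqueness part of Proposition~\ref{AuxIntergralEquationProposition1}, $P$ must coincide with the explicit solution \eqref{AuxIntergralEquation1Solution}, which reads
\[
	P(t)=\overrightarrow{U}_{t,T}G\overleftarrow{\Psi}_{T,t}+
	\int_t^T\overrightarrow{U}_{t,r}C(r)\overleftarrow{\Psi}_{r,t}\,dr,
\]
i.e.\ \eqref{MildSolutionRepresentation1}. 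Conversely, suppose $P$ satisfies \eqref{MildSolutionRepresentation1}. Since $\overleftarrow{\Psi}_{t,s}$ is by hypothesis a solution of \eqref{PhiFamilityDefinition}, it is the family $\overleftarrow{\Omega}_{t,s}$ associated with $Q_1(r)=B(r)P(r)$, so \eqref{MildSolutionRepresentation1} is exactly formula \eqref{AuxIntergralEquation1Solution}; the ``existence'' half of Proposition~\ref{AuxIntergralEquationProposition1} then says this $P$ solves \eqref{AuxIntergralEquation1}, which upon substituting back $Q_{12}=C$ and $Q_1=BP$ is precisely \eqref{IntegralRiccatiEq}. This closes the loop.

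The only subtlety — and the step I expect to require the most care — is the circularity inherent in ``freezing'' $Q_1=B(r)P(r)$: Proposition~\ref{AuxIntergralEquationProposition1} is being applied with data that depend on the very function $P$ under discussion, so one must make sure that $\overleftarrow{\Psi}_{t,s}$ is genuinely determined by $P$ (uniqueness in \eqref{PhiFamilityDefinition}) and that both implications treat $P$ as fixed throughout, never using \eqref{IntegralRiccatiEq} to alter the coefficients mid-argument. A secondary point is verifying the regularity hypothesis \ref{FunctionsQ} for the frozen coefficients, which as noted reduces to the composition stability of $C_s$; and that $\overleftarrow{\Psi}_{t,s}$ is uniformly bounded, which is part of the conclusion of Theorem~9.19. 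Once these bookkeeping items are in place, the proof is a direct citation of Proposition~\ref{AuxIntergralEquationProposition1} in both directions, with no further computation needed.
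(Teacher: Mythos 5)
Your proposal is correct and follows essentially the same route as the paper: the paper's proof is precisely the citation of Proposition~\ref{AuxIntergralEquationProposition1} with $Q_1(t)=B(t)P(t)$ and $\overleftarrow{\Omega}_{t,s}=\overleftarrow{\Psi}_{t,s}$ (and implicitly $Q_{12}=C$), exactly as you do. Your additional bookkeeping on the strong continuity of $BP$ and the unique solvability of \eqref{PhiFamilityDefinition} only makes explicit what the paper leaves tacit.
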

\begin{proof}
The proof readily follows from Proposition \ref{AuxIntergralEquationProposition1},
where one must set $Q_1(t)=B(t)P(t)$ and
$\overleftarrow{\Omega}_{t,s}=\overleftarrow{\Psi}_{t,s}$.
\end{proof}
\begin{proposition}\label{RiccatiEqSolutionRepresenation2}
Let $P\in C_s(\Interval; \Lin(\Banach_1,\Banach_2))$,
let a forward evolution famuily $\overleftarrow{\Psi}_{t,s}$ in $\Lin(\Banach_1)$
be a solution of Eq. \eqref{PhiFamilityDefinition}, and let a backward evolution
family $\overrightarrow{\Psi}_{s,t}$ in $\Lin(\Banach_2)$
be a solution of the equation
\begin{equation}\label{PsiFamilityDefinition}
	\overrightarrow{\Psi}_{t,s}=\overrightarrow{U}_{t,s}-
	\int_t^s\overrightarrow{U}_{t,r}P(r)B(r)\overrightarrow{\Psi}_{r,s}dr
	\quad 0\leq t\leq s\leq T.
\end{equation}
The operator function $P$ is a solution of the Riccati
equation \eqref{IntegralRiccatiEq} if and only if the relation
\begin{equation}\label{MildSolutionRepresentation2}
	P(t)=\overrightarrow{\Psi}_{t,T}G\overleftarrow{\Psi}_{T,t}+
	\int_t^T\overrightarrow{\Psi}_{t,r}(C(r)+P(r)B(r)P(r))\overleftarrow{\Psi}_{r,t}dr
\end{equation}
holds for each $t\in\Interval$.
\end{proposition}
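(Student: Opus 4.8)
My plan is to deduce the equivalence from Proposition~\ref{AuxIntergralEquationProposition3} by means of the purely algebraic identity
\[
	C(r)-P(r)B(r)P(r)=\bigl(C(r)+P(r)B(r)P(r)\bigr)-P(r)\bigl(B(r)P(r)\bigr)-\bigl(P(r)B(r)\bigr)P(r),
\]
which recasts the Riccati nonlinearity as a fixed source term minus two terms linear in the unknown. First I would set $Q_1(r)=B(r)P(r)$, $Q_2(r)=P(r)B(r)$ and $Q_{12}(r)=C(r)+P(r)B(r)P(r)$; by the fact recorded in Subsection~1.1 that a composition of strongly continuous operator functions is strongly continuous, these lie in $C_s(\Interval;\Lin(\Banach_1))$, $C_s(\Interval;\Lin(\Banach_2))$ and $C_s(\Interval;\Lin(\Banach_1,\Banach_2))$ respectively, so condition~\ref{FunctionsQ} holds. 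Next I would invoke the Theorem recalled in Subsection~1.3 and its backward analogue, applied to $B(\cdot)P(\cdot)$ and $P(\cdot)B(\cdot)$, to conclude that the families $\overleftarrow{\Psi}_{t,s}$, $\overrightarrow{\Psi}_{t,s}$ of \eqref{PhiFamilityDefinition} and \eqref{PsiFamilityDefinition} are strongly continuous uniformly bounded forward, resp.\ backward, evolution families (so conditions~\ref{ForwardEvolutionFamilyU}--\ref{BackwardEvolutionFamilyU} are met); comparing \eqref{PhiFamilityDefinition}, \eqref{PsiFamilityDefinition} with the defining equations of $\overleftarrow{\Omega}$ and $\overrightarrow{\Omega}$ shows, by the uniqueness in their construction, that $\overleftarrow{\Psi}$ and $\overrightarrow{\Psi}$ are exactly the families $\overleftarrow{\Omega}$, $\overrightarrow{\Omega}$ of Proposition~\ref{AuxIntergralEquationProposition3} for this data.

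Next I would consider the auxiliary \emph{linear} integral equation for an unknown $\widetilde{P}\in C_s(\Interval;\Lin(\Banach_1,\Banach_2))$,
\[
	\widetilde{P}(t)=\overrightarrow{U}_{t,T}G\overleftarrow{U}_{T,t}+
	\int_t^T\overrightarrow{U}_{t,r}\bigl[Q_{12}(r)-\widetilde{P}(r)Q_1(r)-Q_2(r)\widetilde{P}(r)\bigr]\overleftarrow{U}_{r,t}\,dr,
\]
where $Q_1,Q_2,Q_{12}$ are the fixed coefficients above, built from the given $P$ and not varying with $\widetilde{P}$. By Proposition~\ref{AuxIntergralEquationProposition3} this equation has the unique solution
\[
	\widetilde{P}(t)=\overrightarrow{\Psi}_{t,T}G\overleftarrow{\Psi}_{T,t}+
	\int_t^T\overrightarrow{\Psi}_{t,r}Q_{12}(r)\overleftarrow{\Psi}_{r,t}\,dr
	=\overrightarrow{\Psi}_{t,T}G\overleftarrow{\Psi}_{T,t}+
	\int_t^T\overrightarrow{\Psi}_{t,r}\bigl(C(r)+P(r)B(r)P(r)\bigr)\overleftarrow{\Psi}_{r,t}\,dr,
\]
that is, the unique solution of the auxiliary equation is precisely the right-hand side of \eqref{MildSolutionRepresentation2}.

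Finally I would close the loop. Substituting $\widetilde{P}=P$ into the integrand of the auxiliary equation and using the identity above gives $Q_{12}(r)-P(r)Q_1(r)-Q_2(r)P(r)=C(r)-P(r)B(r)P(r)$, so ``$P$ satisfies the auxiliary equation'' is literally the statement ``$P$ is a solution of the Riccati integral equation \eqref{IntegralRiccatiEq}''. On the other hand, since the auxiliary equation has a unique solution equal to the right-hand side of \eqref{MildSolutionRepresentation2}, ``$P$ satisfies the auxiliary equation'' is equivalent to ``\eqref{MildSolutionRepresentation2} holds''; chaining the two equivalences proves the proposition. (Alternatively, one may first apply Proposition~\ref{RiccatiEqSolutionRepresenation1} to rewrite the Riccati equation as \eqref{MildSolutionRepresentation1} and then apply Proposition~\ref{AuxIntergralEquationProposition2} with base forward family $\overleftarrow{\Psi}$ in $\Lin(\Banach_1)$, $Q_2=PB$ and $Q_{12}=C+PBP$; this routes through the already proved representation but is otherwise identical.) The only point I expect to require care is this last bookkeeping: one must keep in mind that in the auxiliary equation the coefficients $Q_1,Q_2,Q_{12}$ are frozen at the particular $P$ of the statement, so that the nonlinear Riccati equation and the linear auxiliary equation --- genuinely different equations --- agree exactly on the single function $P$; everything else (the evolution-family properties of $\overleftarrow{\Psi},\overrightarrow{\Psi}$, the $C_s$-membership of the coefficients, and their identification with $\overleftarrow{\Omega},\overrightarrow{\Omega}$) is a routine consequence of the earlier results.
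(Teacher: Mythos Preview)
Your proposal is correct and follows essentially the same approach as the paper: the paper's proof consists of a single sentence invoking Proposition~\ref{AuxIntergralEquationProposition3} with the substitutions $Q_1(t)=B(t)P(t)$, $Q_2(t)=P(t)B(t)$, $Q_{12}(t)=C(t)+P(t)B(t)P(t)$ and $\overleftarrow{\Omega}=\overleftarrow{\Psi}$, $\overrightarrow{\Omega}=\overrightarrow{\Psi}$, which is exactly what you do. Your write-up simply makes explicit the algebraic identity, the $C_s$-membership checks, the identification of $\overleftarrow{\Psi},\overrightarrow{\Psi}$ with $\overleftarrow{\Omega},\overrightarrow{\Omega}$ via uniqueness, and the ``if and only if'' logic that the paper leaves to the reader.
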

\begin{proof}
The proof readily follows from Proposition \ref{AuxIntergralEquationProposition3},
where one must set $\overrightarrow{\Omega}_{t,s}=\overrightarrow{\Psi}_{t,s}$,
$\overleftarrow{\Omega}_{t,s}=\overleftarrow{\Psi}_{t,s}$,
$Q_1(t)=B(t)P(t)$, $Q_2(t)=P(t)B(t)$ and $Q_{12}(t)=C(t)+P(t)B(t)P(t)$.
\end{proof}

\textbf{2.3.} To prove the uniqueness of the solution of the Riccati integral
equation, we need a generalization of the following result
\cite[Sec. IV, Lemma 2.2]{RepresentationAndControl}
\begin{proposition}\label{ContractionProposition}
Let
\[
	Q_0\in C_u([a,b];\Lin(\Banach_1,\Banach_2)),\quad
	B\in C_u([a,b];\Lin(\Banach_2,\Banach_1)),
\]
and let operator functions $\{Q_{s,t}^{(1,2)}\}_{a\leq t\leq s\leq b}$
ranging in  $\Lin(\Banach_{1,2})$ be strongly continuous separately in $t$ and
$s$ and uniformly bounded, $\|Q_{s,t}^{(1,2)}\|_{\Lin(\Banach_{1,2})}\leq M_{1,2}$.
Further, let $\rho$ be a number satisfying the inequalities
\begin{equation}\label{ConditionsOnRho}
	\|Q_0\|_u+\rho^2(b-a)M_1M_2\|B\|_u\leq\rho,\quad
	2\rho (b-a)M_1M_2\|B\|_u<1
\end{equation}
Then the mapping $\Gamma$ acting on the space $C_u([a,b];\Lin(\Banach_1,\Banach_2))$
by the rule
\[
	\Gamma(P)(t)=Q_0(t)-
	\int_t^b Q^{(2)}_{r,t}P(r)B(r)P(r)Q^{(1)}_{r,t}dr
\]
is a contraction on the ball
\[
	B_\rho=\{P\in C_u([a,b];\Lin(\Banach_1,\Banach_2)):
	\|P\|_u\leq \rho \}.
\]
\end{proposition}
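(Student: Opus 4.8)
The plan is to show that $\Gamma$ maps the ball $B_\rho$ into itself and is a strict contraction there, using the two inequalities in \eqref{ConditionsOnRho}. First I would check that $\Gamma$ is well-defined, i.e. that $\Gamma(P)\in C_u([a,b];\Lin(\Banach_1,\Banach_2))$ whenever $P\in C_u([a,b];\Lin(\Banach_1,\Banach_2))$: the integrand $Q^{(2)}_{r,t}P(r)B(r)P(r)Q^{(1)}_{r,t}$ is strongly continuous in $r$ (composition of strongly continuous factors, with the uniformly bounded $Q^{(1,2)}$ on the outside), so the strong integral exists, and strong continuity of $t\mapsto\Gamma(P)(t)$ follows from separate strong continuity of $Q^{(1,2)}_{r,t}$ in $t$ together with dominated convergence; $Q_0$ contributes the continuous term. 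The norm bound $\|\Gamma(P)(t)\| \le \|Q_0\|_u + \int_t^b M_2\,\rho\,\|B\|_u\,\rho\,M_1\,dr \le \|Q_0\|_u + \rho^2(b-a)M_1M_2\|B\|_u \le \rho$ by the first inequality in \eqref{ConditionsOnRho}, so $\Gamma(B_\rho)\subseteq B_\rho$.

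Next I would estimate $\Gamma(P_1)-\Gamma(P_2)$ for $P_1,P_2\in B_\rho$. The $Q_0$ terms cancel, and the difference of integrands is handled by the standard bilinear telescoping identity
\[
	P_1 B P_1 - P_2 B P_2 = (P_1-P_2)B P_1 + P_2 B (P_1 - P_2),
\]
so that
\[
	\|\Gamma(P_1)(t)-\Gamma(P_2)(t)\| \le \int_t^b M_1 M_2\|B\|_u\big(\|P_1(r)-P_2(r)\|\,\rho + \rho\,\|P_1(r)-P_2(r)\|\big)\,dr,
\]
which is bounded by $2\rho(b-a)M_1M_2\|B\|_u\,\|P_1-P_2\|_u$. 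Taking the supremum over $t\in[a,b]$ gives $\|\Gamma(P_1)-\Gamma(P_2)\|_u \le \big(2\rho(b-a)M_1M_2\|B\|_u\big)\|P_1-P_2\|_u$, and the second inequality in \eqref{ConditionsOnRho} makes the constant strictly less than $1$, so $\Gamma$ is a contraction on $B_\rho$.

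I do not expect a serious obstacle here; the argument is a routine fixed-point estimate. The one point deserving care is the verification that $\Gamma(P)$ actually lies in the space $C_u$ — specifically the strong continuity in $t$ of the integral term — since the kernels $Q^{(1,2)}_{r,t}$ are only assumed separately strongly continuous (not jointly), so one must argue continuity in $t$ for fixed $x$ by applying the dominated convergence theorem to $\|(Q^{(2)}_{r,t}-Q^{(2)}_{r,t_0})P(r)B(r)P(r)Q^{(1)}_{r,t}x\| + \|Q^{(2)}_{r,t_0}P(r)B(r)P(r)(Q^{(1)}_{r,t}-Q^{(1)}_{r,t_0})x\|$ together with the uniform operator-norm bounds. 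Apart from that bookkeeping, both claims follow directly from the two hypotheses in \eqref{ConditionsOnRho}.
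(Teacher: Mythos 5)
Your proposal is correct and follows essentially the same route as the paper: the invariance $\Gamma(B_\rho)\subseteq B_\rho$ via the first inequality in \eqref{ConditionsOnRho}, and the Lipschitz estimate with constant $2\rho(b-a)M_1M_2\|B\|_u<1$ via the bilinear telescoping identity and the second inequality. Your extra remark on the strong continuity in $t$ of the integral term (which the paper leaves implicit) is a welcome but minor addition.
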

\begin{proof}
Let us show that $\Gamma(B_\rho)\subseteq B_\rho$. Let $P\in B_\rho$.
For each $x\in\Banach_1$, we have
\[
	\|Q^{(2)}_{r,t}P(r)B(r)P(r)Q^{(1)}_{r,t}x\|_{\Banach_2}\leq
	\rho^2M_1M_2\|B\|_u\|x\|_{\Banach_1}.
\]
and hence
\begin{multline*}
	\|\Gamma(P)(t)x\|_{\Banach_2}=
	\left\|Q_0(t)-\int_t^bQ^{(2)}_{r,t}P(r)B(r)P(r)Q^{(1)}_{r,t}xdr\right\|_{\Banach_2}\leq\\
	\|Q_0(t)x\|_{\Banach_2}+
	\int_t^b\|Q^{(2)}_{r,t}P(r)B(r)P(r)Q^{(1)}_{r,t}x\|_{\Banach_2}dr\leq\\
	(\|Q_0\|_u+\rho^2M_1M_2(b-t)\|B\|_u)\|x\|_{\Banach_1}.
\end{multline*}
Since $x$ and $t\in[a,b]$ are arbitrary, we take into account the first
inequality in \eqref{ConditionsOnRho} and obtain
\[
	\|\Gamma(P)\|_u\leq
	\|Q_0\|_u+\rho^2(b-a)M_1M_2\|B\|_u\leq\rho,
\]
i.e., $\Gamma(P)\in B_\rho$. Further, let $P_1,P_2\in B_\rho$.
For each $x\in\Banach_1$,
\begin{multline*}
	\|Q^{(2)}_{r,t}\{P_2(r)B(r)P_2(r)-P_1(r)B(r)P_1(r)\}Q^{(1)}_{r,t}x\|_{\Banach_2}=\\
	M_1M_2\|(P_2(r)-P_1(r))B(r)P_2(r)+P_1(r)B(r)(P_2(r)-P_1(r))\| \|x\|_{\Banach_1}\leq\\
	M_1M_2\|P_2(r)-P_1(r)\|\|B(r)\|\{\|P_1(r)\|+\|P_2(r)\| \}
	\|x\|_{\Banach_1}\leq\\
	2\rho M_1M_2\|P_2-P_1\|_u\|B\|_u\|x\|_{\Banach_1}.
\end{multline*}
Then
\begin{multline*}
	\|\Gamma(P_1)(t)x-\Gamma(P_2)(t)x\|_{\Banach_2}=\\
	\left\|\int_t^bQ^{(2)}_{r,t}\{P_2(r)B(r)P_2(r)-P_1(r)B(r)P_1(r)\}Q^{(1)}_{r,t}xdr\right\|_{\Banach_2}\leq\\
	\int_t^b\left\|Q^{(2)}_{r,t}\{P_2(r)B(r)P_2(r)-P_1(r)B(r)P_1(r)\}Q^{(1)}_{r,t}x\right\|_{\Banach_2}dr\leq\\
	2\rho (b-t)M_1M_2\|P_2-P_1\|_u\|B\|_u\|x\|_{\Banach_1}.
\end{multline*}
or, finally,
\[
	\|\Gamma(P_1)-\Gamma(P_2)\|_u\leq
	\{2\rho (b-a)M_1M_2\|B\|_u\}\|P_1-P_2\|_u.
\]
Since $2\rho (b-a)M_1M_2\|B\|_u<1$ by the second inequality in
\eqref{ConditionsOnRho} it follows that the mapping $\Gamma$
in the space $C_u([a,b];\Lin(\Banach_1,\Banach_2))$
is a contraction mapping of the ball $B_\rho$ into itself.
The proof of the proposition is complete.
\end{proof}
\begin{corolario}\label{Corolario2}
Under the assumptions of Proposition \ref{ContractionProposition},
the equation
\[
	P(t)=Q_0(t)-\int_t^b Q^{(2)}_{r,t}P(r)B(r)P(r)Q^{(1)}_{r,t}dr
\]
has a unique solution $P\in C_u([a,b];\Lin(\Banach_1,\Banach_2))$ with
$\|P\|_u\leq\rho$.
\end{corolario}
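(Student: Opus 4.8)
The plan is to deduce this corollary directly from Proposition \ref{ContractionProposition} by invoking the Banach fixed-point theorem. The key observation is that a solution $P\in C_u([a,b];\Lin(\Banach_1,\Banach_2))$ of the equation
\[
	P(t)=Q_0(t)-\int_t^b Q^{(2)}_{r,t}P(r)B(r)P(r)Q^{(1)}_{r,t}dr
\]
satisfying $\|P\|_u\leq\rho$ is exactly the same thing as a fixed point of the mapping $\Gamma$ in the ball $B_\rho$. So the entire content of the corollary is the existence and uniqueness of such a fixed point.

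First I would note that $C_u([a,b];\Lin(\Banach_1,\Banach_2))$ is a Banach space (as recorded in Section 1.1), so the closed ball $B_\rho$ is a complete metric space with the metric induced by $\|\cdot\|_u$; it is nonempty since the first inequality in \eqref{ConditionsOnRho} forces $\rho\geq\|Q_0\|_u\geq 0$, hence the zero function lies in $B_\rho$. Next I would invoke Proposition \ref{ContractionProposition}, which asserts precisely that $\Gamma(B_\rho)\subseteq B_\rho$ and that $\Gamma$ is a contraction on $B_\rho$ (with contraction constant $2\rho(b-a)M_1M_2\|B\|_u<1$). The Banach fixed-point theorem then yields a unique $P\in B_\rho$ with $\Gamma(P)=P$, which is the claimed unique solution.

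I do not expect any real obstacle here: this is a routine corollary of the contraction property already established, and the only points requiring a word of justification are the completeness of $B_\rho$ and the identification of fixed points of $\Gamma$ with solutions of the integral equation, both of which are immediate. If anything, the one thing worth a sentence is to make explicit that uniqueness is claimed only within the ball $B_\rho$, not among all of $C_u([a,b];\Lin(\Banach_1,\Banach_2))$ — which is exactly what the Banach fixed-point theorem delivers and all that the statement asks for.
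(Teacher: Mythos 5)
Your proof is correct and is exactly the argument the paper intends: the corollary is simply the Banach fixed-point theorem applied to the contraction $\Gamma$ on the ball $B_\rho$ established in Proposition \ref{ContractionProposition}. Your additional remarks (completeness of $B_\rho$, identification of fixed points with solutions, and uniqueness only within the ball) are exactly the routine details the paper leaves implicit.
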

\begin{proposition}\label{PropositionOnUniqueSolvability}
Let numbers  $r_B, r_C$ and $r_G$
satisfy the inequality
\[
	4(b-a)M_1^2M_2^2(r_G+(b-a)r_C)r_B<1
\]
and let operator functions $Q^{(1,2)}_{s,t}$ satisfy the assumptions of
Proposition \ref{ContractionProposition}.
Then the equation
\begin{equation}\label{AuxInteralEqiationOn_ab}
	P(t)=Q^{(2)}_{b,t}GQ^{(1)}_{b,t}+
	\int_t^b Q^{(2)}_{r,t}\{C(r)-P(r)B(r)P(r)\}Q^{(1)}_{r,t}dr
\end{equation}
has a unique solution $P\in C_u([a,b];\Lin(\Banach_1,\Banach_2))$
for arbitrary $G\in\Lin(\Banach_1,\Banach_2)$,
$C\in C_u([a,b];\Lin(\Banach_1,\Banach_2))$ and
$B\in C_u([a,b];\Lin(\Banach_2,\Banach_1))$ such that
\[
	\|G\|_{\Lin(\Banach_1,\Banach_2)}\leq r_G\quad
	\|C\|_u\leq r_C\quad \|B\|_u\leq r_B,
\]
and one has $\|P\|_u\leq 2M_1M_2(r_G+(b-a)r_C)$.
\end{proposition}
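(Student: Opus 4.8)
The plan is to put Eq.~\eqref{AuxInteralEqiationOn_ab} into the exact shape covered by Proposition~\ref{ContractionProposition} and Corollary~\ref{Corolario2}. First I would set
\[
	Q_0(t)=Q^{(2)}_{b,t}GQ^{(1)}_{b,t}+\int_t^b Q^{(2)}_{r,t}C(r)Q^{(1)}_{r,t}\,dr ,
\]
so that Eq.~\eqref{AuxInteralEqiationOn_ab} reads $P(t)=Q_0(t)-\int_t^b Q^{(2)}_{r,t}P(r)B(r)P(r)Q^{(1)}_{r,t}\,dr$. One checks that $Q_0\in C_u([a,b];\Lin(\Banach_1,\Banach_2))$: strong continuity in $t$ of the first term is immediate from the separate strong continuity of $Q^{(1,2)}_{b,t}$ and the boundedness of $G$, while for the integral term one passes to the limit under the integral sign via the dominated convergence theorem, using the separate strong continuity of $Q^{(1,2)}_{r,t}$ in $t$ and the uniform bound $\|Q^{(2)}_{r,t}C(r)Q^{(1)}_{r,t}\|\le M_1M_2\|C\|_u$. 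That same bound, together with $\|Q^{(2)}_{b,t}GQ^{(1)}_{b,t}\|\le M_1M_2 r_G$, gives
\[
	\|Q_0\|_u\le M_1M_2 r_G+(b-a)M_1M_2 r_C=M_1M_2\bigl(r_G+(b-a)r_C\bigr).
\]

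Next I would choose $\rho=2M_1M_2(r_G+(b-a)r_C)$, so $\|Q_0\|_u\le\rho/2$, and verify the two inequalities \eqref{ConditionsOnRho}. From the hypothesis, $2\rho(b-a)M_1M_2\|B\|_u\le 2\rho(b-a)M_1M_2 r_B=4(b-a)M_1^2M_2^2(r_G+(b-a)r_C)r_B<1$, which is the second inequality; consequently $\rho^2(b-a)M_1M_2\|B\|_u=\tfrac{\rho}{2}\bigl(2\rho(b-a)M_1M_2\|B\|_u\bigr)<\rho/2$, whence $\|Q_0\|_u+\rho^2(b-a)M_1M_2\|B\|_u<\rho/2+\rho/2=\rho$, which is the first inequality. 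By Proposition~\ref{ContractionProposition} and Corollary~\ref{Corolario2}, Eq.~\eqref{AuxInteralEqiationOn_ab} then has a solution $P\in C_u([a,b];\Lin(\Banach_1,\Banach_2))$ with $\|P\|_u\le\rho=2M_1M_2(r_G+(b-a)r_C)$, and this solution is unique inside the ball $B_\rho$.

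It remains to upgrade uniqueness in $B_\rho$ to uniqueness in the whole space $C_u([a,b];\Lin(\Banach_1,\Banach_2))$, by a continuation argument from the right endpoint. Let $P_1,P_2$ be two solutions and $M=\max\{\|P_1\|_u,\|P_2\|_u\}$. If $P_1=P_2$ on $[c,b]$ for some $c\in(a,b]$, then for $t\in[c-\delta,c]$ the contribution of $\int_c^b$ to $P_1(t)-P_2(t)$ cancels, and using $P_1BP_1-P_2BP_2=(P_1-P_2)BP_1+P_2B(P_1-P_2)$ together with $\|Q^{(1,2)}_{r,t}\|\le M_{1,2}$ one gets
\[
	\sup_{t\in[c-\delta,c]}\|P_1(t)-P_2(t)\|_{\Lin(\Banach_1,\Banach_2)}\le 2\delta M_1M_2 M\|B\|_u\sup_{t\in[c-\delta,c]}\|P_1(t)-P_2(t)\|_{\Lin(\Banach_1,\Banach_2)} ,
\]
so for any fixed $\delta<(2M_1M_2 M\|B\|_u)^{-1}$ (a bound independent of $c$) we obtain $P_1=P_2$ on $[c-\delta,b]$; starting from $c=b$ and iterating covers $[a,b]$ in finitely many steps. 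The main obstacle is exactly this final point: Corollary~\ref{Corolario2} only yields uniqueness within $B_\rho$, and an arbitrary solution carries no a priori bound (the natural estimate is a quadratic Gronwall inequality, which need not close), so the continuation/bootstrap step is genuinely needed; the rest is bookkeeping with the constants.
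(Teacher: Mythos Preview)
Your argument is correct and follows the paper's route exactly through the choice of $Q_0$, the value $\rho=2M_1M_2(r_G+(b-a)r_C)$, and the verification of the two inequalities \eqref{ConditionsOnRho}; your chain $\|Q_0\|_u\le\rho/2$ and $\rho^2(b-a)M_1M_2\|B\|_u<\rho/2$ is precisely the paper's computation (up to a couple of obvious typos in the paper). The paper then simply invokes Proposition~\ref{ContractionProposition} and Corollary~\ref{Corolario2} and stops.

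The one substantive difference is your final paragraph. The paper's proof, as written, appeals only to Corollary~\ref{Corolario2}, which yields a unique fixed point \emph{in the ball $B_\rho$}; it does not address uniqueness among all solutions in $C_u([a,b];\Lin(\Banach_1,\Banach_2))$. You noticed this and supplied a standard right-to-left continuation argument with step size $\delta<(2M_1M_2M\|B\|_u)^{-1}$, where $M=\max\{\|P_1\|_u,\|P_2\|_u\}$; the computation is correct, and the induction starts because any solution automatically satisfies $P(b)=Q^{(2)}_{b,b}GQ^{(1)}_{b,b}$. This extra step is not in the paper's proof of the proposition, but it is what the stated conclusion (and its later use in the uniqueness part of Theorem~\ref{MainResult}, where the second solution $Q$ carries no a~priori bound) actually requires. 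So your proof is the paper's proof plus a short, correct patch.
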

\begin{proof}
Set
\[
	Q_0(t)=Q^{(2)}_{b,t}GQ^{(1)}_{b,t}+\int_t^bQ^{(2)}_{r,t}C(r)Q^{(1)}_{r,t}dr
\]
Then Eq. \eqref{AuxInteralEqiationOn_ab} can be written in the form
$P=\Gamma(P)$, where the mapping $\Gamma(\cdot)$ is defined in
Proposition \ref{ContractionProposition}. Let $\rho=2M_1M_2(r_G+(b-a)r_C)$.
By virtue of the assumptions in the proposition to be proved, we have
\[
	2\rho(b-a)M_1M_2\|B\|_u\leq
	4M_1^2M_2^2(b-a)(r_G+(b-a)r_C)r_B<1.
\]
Further, since
\[
	\|Q_0\|_u\leq M_1M_2(\|G\|_{\Lin(\Banach_1,\Banach_2)}+(b-a)\|C\|_u)\leq
	M_1M_2(r_C+(b-a)r_C),
\]
we have
\begin{multline*}
	\|Q_0\|_u+\rho^2(b-a)M_1M_1\|B\|_u\leq\\
	M_1M_2(\|G\|_{\Lin(\Banach_1,\Banach_2)}+(b-a)\|C\|_u)+\rho^2(b-a)M_1M_1\|B\|_u<\\
	M_1M_2(r_G+(b-a)r_C)+\rho/2=\rho.
\end{multline*}
Hence inequalities \eqref{ConditionsOnRho} are satisfied for the number $\rho$.
Now the unique solvability of Eq. \eqref{AuxInteralEqiationOn_ab} follows
from Proposition \ref{ContractionProposition} and Corollary \ref{Corolario2}.
The proof of the proposition is complete.
\end{proof}

\section{Main result}

Let $\Banach$ be a reflexive Banach space. The duality between the spaces
$\Banach$ and $\Banach^*$ will be denoted by $\langle y,x\rangle$, where
$y\in\Banach^*$ and $x\in\Banach$.

Let $A_1\in\Lin(\Banach,\Banach^*)$. Then the adjoint operator
$A_1^*\in\Lin(\Banach^{**},\Banach^*)$. Using the canonical isomorphism
between the spaces $\Banach^{**}$ and $\Banach$ we can treat the adjoint
operator as $A_1^*\in\Lin(\Banach,\Banach^*)$.
A straightforward verification shows that
\[
	\langle A_1x_1,x_2\rangle=\overline{\langle A_1^*x_2,x_1\rangle}\quad
	\forall x_1,x_2\in\Banach.
\]
\begin{definicion}
An operator $A_1\in\Lin(\Banach,\Banach^*)$ is said to be \emph{self-adjoint}
if $A_1=A_1^*$. This is equivalent to the condition that
$\langle A_1x,x\rangle\in\R$ for all $x\in\Banach$.

We say that a self-adjoint operator $A_1\in\Lin(\Banach,\Banach^*)$ is
\emph{nonnegative} and write $A_1\geq0$ if $\langle A_1x,x\rangle\geq0$
for all $x\in\Banach$.
\end{definicion}
In a similar way, if $A_2\in\Lin(\Banach^*,\Banach)$, then, identifying
the spaces $\Banach^{**}$ and $\Banach$, we assume that
$A_2^*\in\Lin(\Banach^*,\Banach)$; a straightforward verification shows that
\[
	\langle x_1,A_2x_2\rangle=\overline{\langle x_2, A_2^* x_1\rangle}\quad
	\forall x_1,x_2\in\Banach^*.
\]
\begin{definicion}
An operator $A_2\in\Lin(\Banach^*,\Banach)$ is said to be \emph{elf-adjoint},
if $A_2=A_2^*$. This is equivalent to the condition that
$\langle x,A_2x\rangle\in\R$ for all s$x\in\Banach^*$.

We say that a self-adjoint operator $A_2\in\Lin(\Banach^*,\Banach)$ is 
\emph{nonnegative} and write $A_2\geq0$ if $\langle x,A_2x\rangle\geq0$ for all
$x\in\Banach^*$.
\end{definicion}

Let us state the main result about the unique solvability of the Riccati
integral equation.
\begin{teorema}\label{MainResult}
Let $\Banach$ be a reflexive Banach space, and let the following conditions
be satisfied:
\begin{enumerate}
	\item $\{\overleftarrow{U}_{t,s}\}_{0\leq s\leq t\leq T}$ is a strongly
	continuous uniformly bounded forward evolution family in $\Lin(\Banach)$;
	\item $\overrightarrow{U}_{t,s}=\left(\overleftarrow{U}_{s,t}\right)^*$ is
	a backward evolution family. (Since $\Banach$ is s reflexive,
	it follows that this family is strongly continuous and uniformly bounded in
	$\Lin(\Banach^*)$);
	\item The operator functions $C$ and $B$ satisfy the inclusions
	\[
		C\in C_s(\Interval;\Lin(\Banach,\Banach^*))\quad \text{and}\quad
		B\in C_s(\Interval;\Lin(\Banach^*,\Banach));
	\]
	\item $C(t)=C^*(t)\geq0$ and $B(t)=B^*(t)\geq0$ for all $t\in\Interval$.
\end{enumerate}
Then for an arbitrary self-adjoint nonnegative operator
$G\in\Lin(\Banach,\Banach^*)$
the Riccati integral equation \eqref{IntegralRiccatiEq} has a unique
solution $P\in C_s(\Interval;\Lin(\Banach,\Banach^*))$ and
$P(t)=P^*(t)\geq0$ for all $t\in\Interval$.
\end{teorema}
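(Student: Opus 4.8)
The plan is to combine a local-in-time contraction argument with an a priori bound that is uniform over the whole interval $\Interval$, together with a monotone approximation scheme to produce a nonnegative self-adjoint solution. First I would establish \emph{local existence and uniqueness}: on a small subinterval $[\tau-\delta,\tau]$ the evolution families $\overleftarrow U_{t,s}$ and $\overrightarrow U_{t,s}=(\overleftarrow U_{s,t})^*$ are uniformly bounded by some $M$, and Proposition \ref{PropositionOnUniqueSolvability} (with $Q^{(1)}_{r,t}=\overleftarrow U_{r,t}$, $Q^{(2)}_{r,t}=\overrightarrow U_{t,r}$, $M_1=M_2=M$, $r_G=\|G\|$, $r_C=\|C\|_u$, $r_B=\|B\|_u$) yields a unique solution in $C_u$ on any subinterval whose length $\delta$ satisfies $4\delta M^4(r_G+\delta r_C)r_B<1$, with the explicit bound $\|P\|_u\le 2M^2(r_G+\delta r_C)$. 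Crucially this $\delta$ depends only on the global constants $M,\|B\|_u,\|C\|_u,\|G\|$ and not on the base point $\tau$, so after solving on $[T-\delta,T]$ one uses the remark following Definition \ref{IntegralRiccatiEq} — the solution restarted from $P(T-\delta)$ satisfies the same type of equation with new terminal datum $G_1:=P(T-\delta)$ — but to keep the step size from shrinking I must control $\|G_1\|$; this is where the a priori bound is needed. The key step is therefore to prove that \emph{any} $C_s$-solution is in fact $C_u$ with a bound depending only on $M,\|B\|_u,\|C\|_u,\|G\|$, so that the local step length never degenerates and finitely many steps cover $[0,T]$. I expect this a priori bound, and the passage from $C_u$-solutions on subintervals to a global $C_s$-solution, to be the main technical obstacle.

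Next I would address \emph{self-adjointness and nonnegativity}. Because $G=G^*$, $C(r)=C^*(r)$, $B(r)=B^*(r)$ and $\overrightarrow U_{t,r}=(\overleftarrow U_{r,t})^*$, taking adjoints in \eqref{IntegralRiccatiEq} shows that $P^*$ solves the same equation (using that $(P(r)B(r)P(r))^*=P^*(r)B^*(r)P^*(r)=P(r)B(r)P(r)$ once we know $P=P^*$, so more precisely $P^*$ and $P$ satisfy equations that coincide under the substitution $P\mapsto P^*$); uniqueness then forces $P=P^*$. For nonnegativity I would use the representation from Proposition \ref{RiccatiEqSolutionRepresenation1}: with $\overleftarrow\Psi_{t,s}$ solving \eqref{PhiFamilityDefinition} for this $P$, one has
\[
	P(t)=\overrightarrow U_{t,T}G\overleftarrow\Psi_{T,t}+\int_t^T\overrightarrow U_{t,r}C(r)\overleftarrow\Psi_{r,t}\,dr .
\]
This is not manifestly symmetric, so instead I would pass to the symmetric representation of Proposition \ref{RiccatiEqSolutionRepresenation2},
\[
	P(t)=\overrightarrow\Psi_{t,T}G\overleftarrow\Psi_{T,t}+\int_t^T\overrightarrow\Psi_{t,r}\bigl(C(r)+P(r)B(r)P(r)\bigr)\overleftarrow\Psi_{r,t}\,dr ,
\]
and verify that $\overrightarrow\Psi_{t,r}=(\overleftarrow\Psi_{r,t})^*$ — this follows from the remark relating adjoints of forward/backward evolution families together with $P=P^*$, $B=B^*$, comparing \eqref{PhiFamilityDefinition} and \eqref{PsiFamilityDefinition}. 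Granting that, each summand is of the form $S^*KS$ with $K=G\ge0$ or $K=C(r)+P(r)B(r)P(r)\ge0$ (the latter since $B(r)\ge 0$ gives $\langle P(r)B(r)P(r)x,x\rangle=\langle B(r)P(r)x,P(r)x\rangle\ge0$), hence $\langle P(t)x,x\rangle\ge0$.

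There is a chicken-and-egg issue: the representation used for nonnegativity presupposes the solution $P$ already exists, and $\overleftarrow\Psi$ depends on $P$. To make the argument self-contained I would instead run an \emph{iteration/approximation}: define $P_0=0$ and let $P_{n+1}$ solve the \emph{linear} equation obtained by freezing the quadratic term, i.e. $P_{n+1}(t)=\overrightarrow U_{t,T}G\overleftarrow U_{T,t}+\int_t^T\overrightarrow U_{t,r}\{C(r)-P_n(r)B(r)P_n(r)\}\overleftarrow U_{r,t}\,dr$, or better, the Newton-type linearization that keeps the fixed-point map monotone. Using Proposition \ref{AuxIntergralEquationProposition3} each $P_{n+1}$ has the symmetric representation above with $\overleftarrow\Psi$ built from $P_n$, hence $P_{n+1}=P_{n+1}^*\ge0$; one shows the sequence is uniformly bounded in $C_u$ (via the a priori estimate) and converges in $C_s$ on each short interval by the contraction estimate of Proposition \ref{ContractionProposition} combined with the continuous dependence Proposition \ref{ContinuousDepOfPhiEvolutionFamily} and Corollary \ref{ContinuousDepOfPsiEvolutionFamily}. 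The limit is a $C_s$-solution, it is $=P^*\ge0$ as a strong limit of nonnegative self-adjoint operators (nonnegativity and self-adjointness are preserved under strong limits since $\langle P_n(t)x,x\rangle\to\langle P(t)x,x\rangle$), and uniqueness is the local contraction from Proposition \ref{PropositionOnUniqueSolvability} patched over finitely many steps. Assembling the subinterval solutions into one global $P\in C_s(\Interval;\Lin(\Banach,\Banach^*))$ via the semigroup/restart identity, and checking the pasted function still satisfies \eqref{IntegralRiccatiEq} on all of $\Interval$, completes the proof.
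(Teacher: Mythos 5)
Your plan shares the paper's architecture: the Newton-type linearized iteration \eqref{PnIntegralEquation} started at $P_0=0$, the symmetric representation \eqref{PnFormula} of each iterate obtained from Proposition \ref{AuxIntergralEquationProposition3} (whence $P_{n+1}=P_{n+1}^*\geq0$, using $\overrightarrow{\Psi}^{(n)}_{t,s}=(\overleftarrow{\Psi}^{(n)}_{s,t})^*$), passage to the limit in \eqref{PnFormula} via Proposition \ref{ContinuousDepOfPhiEvolutionFamily}, Corollary \ref{ContinuousDepOfPsiEvolutionFamily} and Proposition \ref{RiccatiEqSolutionRepresenation2}, and uniqueness by applying Proposition \ref{PropositionOnUniqueSolvability} backwards from $T$ in steps of fixed length $\delta$ (which is legitimate for uniqueness, because there the radius $r=\sup_{t\in\Interval}\|P(t)\|$ is finite once a solution is in hand).

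The genuine gap is the convergence of the iterates --- precisely the point you call ``the main technical obstacle'' and then leave open. You assert uniform boundedness ``via the a priori estimate'' and convergence ``on each short interval by the contraction estimate of Proposition \ref{ContractionProposition}'', but neither is available: Proposition \ref{ContractionProposition} controls the Picard iterates of the map $\Gamma$, and the plain Picard scheme with frozen quadratic term does not preserve nonnegativity (its integrand $C(r)-P_n(r)B(r)P_n(r)$ is a difference of nonnegative terms), while the Newton scheme \eqref{PnIntegralEquation}, which does preserve self-adjointness and nonnegativity through \eqref{PnFormula}, is not the Picard iteration of $\Gamma$, so the contraction estimate gives neither its boundedness nor its convergence; likewise your continuation-by-restart argument for existence stalls because nothing bounds $\|P(T-k\delta)\|$ uniformly in $k$. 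The paper closes this gap by an order-theoretic argument absent from your proposal, and this is where reflexivity of $\Banach$ is actually used: subtracting consecutive equations \eqref{PnIntegralEquation} and applying Proposition \ref{AuxIntergralEquationProposition3} gives
\[
 P_{n+1}(t)-P_n(t)=-\int_t^T\overrightarrow{\Psi}^{(n)}_{t,r}\bigl(P_n(r)-P_{n-1}(r)\bigr)B(r)\bigl(P_n(r)-P_{n-1}(r)\bigr)\overleftarrow{\Psi}^{(n)}_{r,t}\,dr\leq0,
\]
so $\{P_n(t)\}$ is a nonincreasing sequence of nonnegative self-adjoint operators; the theorem of Koshkin \cite{Koshkin} for $\Lin(\Banach,\Banach^*)$ with $\Banach$ reflexive then yields a pointwise strong limit $P(t)=P^*(t)\geq0$ together with monotonicity of $\|P_n(t)\|$ (hence the uniform bound you were missing), and uniform strong convergence --- needed to conclude $P\in C_s(\Interval;\Lin(\Banach,\Banach^*))$ and to pass to the limit in \eqref{PnFormula} --- follows from Dini's theorem applied to $f_n(t)=\langle P_n(t)x,x\rangle$ combined with the estimate $\|P_n(t)x-P_{n+m}(t)x\|^2_{\Banach^*}\leq\const\,(f_n(t)-f_{n+m}(t))$. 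Either prove the monotonicity $P_{n+1}\leq P_n$ and invoke such an order-convergence result, or supply the missing uniform a priori bound by another route; as written, your limit object is not shown to exist, so the subsequent self-adjointness and nonnegativity arguments have nothing to act on.
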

\begin{proof}
Following \cite{RepresentationAndControl, CurtainPritchard, PritchardSalamon},
consider the sequence
\[
	\{P_n\}_{n=0}^{+\infty}\subset C_s(\Interval;\Lin(\Banach,\Banach^*))
\]
of operator functions defined recursively as follows.
Set $P_0(t)\equiv0$ and define $P_{n+1}(t)$ as the solution of the equation
\begin{multline}\label{PnIntegralEquation}
	P_{n+1}(t)=\overrightarrow{U}_{t,T}G\overleftarrow{U}_{T,t}+
	\int_t^T\overrightarrow{U}_{t,r}\{C(r)+\\ P_n(r)B(r)P_n(r)-
	P_{n+1}(r)B(r)P_n(r)-P_n(r)B(r)P_{n+1}(r)\}\overleftarrow{U}_{r,t}dr
\end{multline}
for each $n\geq0$. By $\overleftarrow{\Psi}^{(n)}_{t,s}\in\Lin(\Banach)$ and
$\overrightarrow{\Psi}^{(n)}_{s,t}\in\Lin(\Banach^*)$ we denote the solutions
of Eqs. \eqref{PhiFamilityDefinition} and \eqref{PsiFamilityDefinition},
respectively, with $P=P_{n}$.
By Proposition \ref{AuxIntergralEquationProposition3},
the solution of Eq. \eqref{PnIntegralEquation} has the form
\begin{equation}\label{PnFormula}
	P_{n+1}(t)=\overrightarrow{\Psi}^{(n)}_{t,T}G\overleftarrow{\Psi}^{(n)}_{T,t}+
	\int_t^T\overrightarrow{\Psi}^{(n)}_{t,r}\{C(r)+P_{n}(r)B(r)P_{n}(r)\}\overleftarrow{\Psi}^{(n)}_{r,t}dr
\end{equation}
If $P^*_{n}(t)=P_{n}(t)$ for all $t\in\Interval$, then it follows from
Eqs. \eqref{PhiFamilityDefinition} and
\eqref{PsiFamilityDefinition}, the self-adjointness of the operator function
$B(t)$ and the condition
$\overrightarrow{U}_{t,s}=\left(\overleftarrow{U}_{s,t}\right)^*$ that
$\overrightarrow{\Psi}^{(n)}_{t,s}=\left(\overleftarrow{\Psi}^{(n)}_{s,t}\right)^*$.

In view of this equality, it readily follows from Eq. \eqref{PnFormula}
that the self-adjointness of the operator function $P_n$
implies the self-adjointness of the operator function $P_{n+1}$.
Since $P_0=0$, we see that $\{P_n\}_{n=0}^{+\infty}$ is a sequence of
self-adjoint operator functions. Further, a straightforward verification
for Eq. \eqref{PnFormula} shows that the nonnegativity of the operators
($t\in\Interval$)
\[
	G,C(t),B(t)\geq0
\]
implies the nonnegativity of the operator function $P_{n+1}$

Let us show that $P_{n+1}(t)\leq P_{n}(t)$ (i.e. $P_n(t)-P_{n+1}(t)\geq0$)
for all $t\in[0,T]$ and $n\in\N$. Let $Q_{n+1}=P_{n+1}-P_{n}$.
Let us subtract Eq. \eqref{PnIntegralEquation} written for
$P_{n}$ from from the same equation for $P_{n+1}$. After obvious
transformations, we find that the operator function
$Q_{n+1}$ satisfies the equation
\begin{multline*}
	Q_{n+1}(t)=-	\int_t^T\overrightarrow{U}_{t,r}\{Q_n(r)B(r)Q_n(r)+\\
	Q_{n+1}(r)B(r)P_n(r)+P_n(r)B(r)Q_{n+1}(r)\}\overleftarrow{U}_{r,t}dr
\end{multline*}
By Proposition \ref{AuxIntergralEquationProposition3},
the unique solution of this equation has the form
\[
	Q_{n+1}(t)=-\int_t^T \overrightarrow{\Psi}^{(n)}_{t,r}Q_n(r)B(r)Q_n(r)\overleftarrow{\Psi}^{(n)}_{r,t}dr.
\]
Since $Q_n^*(t)=Q_n(t)$ and $B(t)\geq0$, we have
$Q_{n+1}(t)=P_{n+1}(t)-P_n(t)\leq0$ for all $t\in\Interval$.

Thus, $\{P_n(t)\}_{n=1}^{+\infty}$ is a monotone nonincreasing sequence of
nonnegative self-adjoint operators in $\Lin(X,X^*)$ for each $t\in\Interval$.
Then \cite[Th. 4]{Koshkin} for each
$t\in\Interval$ there exists a strong limit
\[
	\slim_{n\to+\infty}P_n(t)=P(t)\in\Lin(X,X^*),
\]
and $P^*(t)=P(t)\geq0$. Moreover, since \cite[Th. 4]{Koshkin}
\[
	\|P_n(t)\|_{\Lin(\Banach,\Banach^*)}=
	\sup_{x\in\Banach,\|x\|=1}\langle P_n(t)x,x\rangle,
\]
we see that the numerical sequence $\|P_n(t)\|_{\Lin(\Banach,\Banach^*)}$
is monotone decreasing for each $t\in\Interval$. 

Since $\{P_n\}_{n=0}^{+\infty}$ are uniformly bounded, then there exist  strong limits
\begin{multline*}
	\slim_{n\to+\infty}\overrightarrow{U}_{t,r}P_{n}(r)B(r)P_{n}(r)\overleftarrow{U}_{r,t}=
	\slim_{n\to+\infty}\overrightarrow{U}_{t,r}P_{n+1}(r)B(r)P_{n}(r)\overleftarrow{U}_{r,t}=\\
	\slim_{n\to+\infty}\overrightarrow{U}_{t,r}P_{n}(r)B(r)P_{n+1}(r)\overleftarrow{U}_{r,t}=
	\overrightarrow{U}_{t,r}P(r)B(r)P(r)\overleftarrow{U}_{r,t}
\end{multline*}
for each $0\leq t\leq r\leq T$.

Let us fix an $x\in\Banach$ and write ($t\leq r$)
\begin{gather*}
	f_n(t)=\langle P_n(t)x,x\rangle,\quad
	f(t)=\langle P(t)x,x\rangle\\
	g(t,r)=-\langle \overrightarrow{U}_{t,r}P(r)B(r)P(r)\overleftarrow{U}_{r,t}x,
	x\rangle\\
	h(t)=\langle \overrightarrow{U}_{t,T}G\overleftarrow{U}_{T,t}
	x\rangle+\int_t^T\langle\overrightarrow{U}_{t,r}C(r)\overleftarrow{U}_{r,t}x,
	x\rangle dr
\end{gather*}
and
\begin{multline*}
	g_n(t,r)=\langle\overrightarrow{U}_{t,r}P_n(r)B(r)P_n(r)\overleftarrow{U}_{r,t}x,x\rangle-\\
	\langle\overrightarrow{U}_{t,r}\{P_{n+1}(r)B(r)P_n(r)+P_n(r)B(r)P_{n+1}(r)]\}\overleftarrow{U}_{r,t}x,
	x\rangle.
\end{multline*}
By construction, 
\begin{itemize}
	\item $f_n\in C(\Interval)$, $f_n(t)\geq0$, $f_{n+1}(t)\leq f_n(t)$
	and the sequence $f_n(t)$ converges to $f(t)$ pointwise on $\Interval$;
	\item $g_n(t,\cdot)\in C([t,T])$ are uniformly bounded and 
	the sequence $g_n(t,\cdot)$ converges to
	$g(t,\cdot)$ pointwise on $[t,T]$ for each $t\in\Interval$;
	\item $g_n(\cdot,r),g(\cdot,r)\in C([0,r])$ for each $r\in\Interval$;
	\item $h\in C(\Interval)$ and Eq. \eqref{PnIntegralEquation} implies
	the equality ($t\in\Interval$)
	\begin{equation}\label{fnIntegralEquality}
		f_n(t)=h(t)+\int_t^T g_n(t,r)dr.
	\end{equation}
\end{itemize}
Thus $f$ and $g(t,\cdot)$ are bounded and measurable. Passing to the limit as
$n\to+\infty$ in \eqref{fnIntegralEquality} we obtain ($t\in\Interval$)
\[
	f(t)=h(t)+\int_t^T g(t,r)dr.
\]
Since $g(\cdot,r)$ is continuous for each $r\in\Interval$ and uniformely bounded, then 
$\int_t^T g(t,r)dr\in C(\Interval)$. Thus $f\in C(\Interval)$.

By virtue of Dini's theorem the functional sequence $\{f_n\}_{n=1}^{+\infty}$ 
converges to $f$ uniformly on $\Interval$. Further, for any $n,m\in\N$
the inequality $P_{n+m}(t)\leq P_n(t)$ implies the estimate
\cite[Th. 4]{Koshkin}
\begin{multline*}
	\|P_n(t)x-P_{n+m}(t)x\|^2_{\Banach}\leq\\
	\|P_n(t)-P_{n+m}(t)\|_{\Lin(\Banach,\Banach^*)}
	\langle P_n(t)x-P_{n+m}(t)x,x\rangle\leq\\
	\mathrm{const}(f_n(t)-f_{n+m}(t)),
\end{multline*}
for all $t\in\Interval$, where the constant does not depend on $t$.
According to the Cauchy convergence test, we find that the uniform
convergence on $\Interval$ of the sequence $\{f_n\}_{n=1}^{+\infty}$
implies the uniform convergence of the sequence $P_nx$ to $Px$
on $\Interval$ and the inclusion $Px\in C(\Interval,\Banach^*)$.
Since $x$ is arbitrary, we conclude that the sequence $\{P_n\}_{n=1}^{+\infty}$
converges in the space $C_s(\Interval;\Lin(\Banach,\Banach^*))$
and one has the inclusion $P\in C_s(\Interval;\Lin(\Banach,\Banach^*))$.

Let $\overleftarrow{\Psi}_{t,s}\in\Lin(\Banach)$ and $\overrightarrow{\Psi}_{s,t}\in\Lin(\Banach^*)$ be solutions of Eqs.
\eqref{PhiFamilityDefinition} and \eqref{PsiFamilityDefinition},
respectively. Let us show that $P$ is a solution of the Riccati integral
equation \eqref{IntegralRiccatiEq}. Since
\[
	B\in C_s(\Interval;\Lin(\Banach^*,\Banach)),\quad
	P_n,P\in C_s(\Interval;\Lin(\Banach,\Banach^*)),
\]
it follows that the sequence $BP_n$ converges to $BP$ in the space
$C_s(\Interval;\Lin(\Banach))$.
Applying Proposition \ref{ContinuousDepOfPhiEvolutionFamily} with $Q_n=BP_n$,
we find that there exists a limit
$\slim_{n\to+\infty}\overleftarrow{\Psi}^{(n)}_{r,t}=\overleftarrow{\Psi}_{r,t}$
uniformly with respect to $r\in[t,T]$. It can be shown in a similar way that
there exists a limit
$\slim_{n\to+\infty}\overrightarrow{\Psi}^{(n)}_{t,r}=\overrightarrow{\Psi}_{t,r}$
uniformly with respect to $r\in[t,T]$.

It follows from the uniform convergence of the sequences
$P_n$, $\overleftarrow{\Psi}^{(n)}_{r,t}$ and
$\overrightarrow{\Psi}^{(n)}_{t,r}$ to $P$, $\overleftarrow{\Psi}_{r,t}$ and
$\overrightarrow{\Psi}_{t,r}$, respectively, that for each $r\in[t,T]$
the following limit exists uniformly with respect to $r\in[t,T]$
\[
	\slim_{n\to+\infty}\overrightarrow{\Psi}^{(n)}_{t,r}\{C(r)+P_n(r)B(r)P_n(r)\}
	\overleftarrow{\Psi}^{(n)}_{r,t}=\overrightarrow{\Psi}_{t,r}
	\{C(r)+P(r)B(r)P(r)\}\overleftarrow{\Psi}_{r,t}
\]
Passing to the limit as $n\to+\infty$ in \eqref{PnFormula} we see that
the operator function $P$ satisfies Eq. \eqref{MildSolutionRepresentation2}
in which the evolution families $\overleftarrow{\Psi}_{r,t}$ and
$\overrightarrow{\Psi}_{t,r}$ are determined by
Eqs. \eqref{PhiFamilityDefinition} and \eqref{PsiFamilityDefinition},
respectively. It follows from Proposition \ref{RiccatiEqSolutionRepresenation2}
that the operator function $P\in C_s(\Interval;\Lin(\Banach,\Banach^*))$
is a solution of the Riccati integral equation \eqref{IntegralRiccatiEq}.

Let us prove the uniqueness of the solution of this equation. Set
\[
	r=\sup_{t\in\Interval}\|P(t)\|_{\Lin(\Banach,\Banach^*)}\quad
	r_C=\sup_{t\in\Interval}\|C(t)\|_{\Lin(\Banach,\Banach^*)}\quad
	r_B=\sup_{t\in\Interval}\|B(t)\|_{\Lin(\Banach^*,\Banach)}
\]
Since $P(T)=G$, we have $r\geq\|G\|_{\Lin(\Banach,\Banach^*)}$. \
Let the following inequality be satisfied for $\delta>0$
\[
	4\delta M_1^2M_2^2(r+\delta r_C)r_B<1.
\]
If $Q$ is another solution of Eq. \eqref{IntegralRiccatiEq}, then, applying
Proposition \ref{PropositionOnUniqueSolvability} to the interval $[T-\delta,T]$
(setting $Q^{(1)}_{r,t}=\overleftarrow{U}_{r,t}$ and
$Q^{(2)}_{r,t}=\overrightarrow{U}_{t,r}$), we obtain
$P(t)=Q(t)$ for all $t\in[T-\delta,T]$. Further,
\[
	P(t)=\overrightarrow{U}_{t,T-\delta}P(T-\delta)\overleftarrow{U}_{T-\delta,t}+
	\int_t^{T-\delta} \overrightarrow{U}_{t,r}\{C(r)-P(r)B(r)P(r)\}\overleftarrow{U}_{r,t}dr.
\]
for all $0\leq t\leq T-\delta$. Applying Proposition
\ref{PropositionOnUniqueSolvability} to the interval $[T-2\delta,T-\delta]$,
we conclude that $P(t)=Q(t)$ for all $t\in[T-2\delta,T-\delta]$.
Continuing this process, we see that $P(t)=Q(t)$ for all $t\in\Interval$.
This proves the uniqueness of the solution of the Riccati integral equation
\eqref{IntegralRiccatiEq}. The proof of the theorem is complete.
\end{proof}


\begin{thebibliography}{99}

\bibitem{RepresentationAndControl}
Bensoussan, A., Da Prato, G., Delfour, M.C., and Mitter, S.K.,
Representation and Control of Infinite Dimensional Systems,
Boston: Birkhaeuser, 2007.

\bibitem{CurtainPritchard} Curtain, R. and Pritchard, A.J.,
The infinite-dimensional Riccati equation for systems defined by
evolution operators, SIAM J. Control Optim., 1976, vol. 14, no. 5,
pp. 951–983.

\bibitem{Lasiecka2004}
Lasiecka, I., Optimal control problems and Riccati equations for
systems with unbounded controls and partially analytic generators,
in Functional Analytic Methods for Evolution Equations, Berlin, 2004,
pp. 313–371.

\bibitem{Gibson1979} Gibson, J.S., The Riccati integral equations for
optimal control problems on Hilbert spaces,
SIAM J. Control Optim., 1979, vol. 17, no. 4, pp. 537–565.

\bibitem{PritchardSalamon}
Pritchard, A.J. and Salamon, D., The linear quadratic control problem for
infinite dimensional systems with unbounded input and output operators,
SIAM J. Control Optim., 1987, vol. 25, no. 1, pp. 121–144.

\bibitem{LasieckaTriggiani1992} Lasiecka, I. and Triggiani, R.,
Riccati differential equations with unbounded coefficients and
non-smooth terminal condition—the case of analytic semigroups,
SIAM J. Math. Anal., 1992, vol. 23, no. 2, pp. 449–481.

\bibitem{DaPratoIchikawa} Da Prato, G. and Ichikawa, A.,
Riccati equations with unbounded coefficients,
Ann. Mat. Pura Appl., 1985, vol. 40, no. 1, pp. 209–211.

\bibitem{Artamonov} Artamonov, N.V., On the solvability of a system of
forward–backward linear equations with unbounded operator coefficients,
Math. Notes, 2016, vol. 100, nos. 5–6, pp. 747–750.

\bibitem{Yong}
Yong, J., Forward–backward evolution equations and applications, 2015,
arXiv: 1508.03550v1.

\bibitem{EngelNagel}
Engel, K. and Nagel, R., One-parameter semigroups for linear evolution
equations, in Grad. Texts in Math., Berlin: Springer-Verlag, 2000, Vol. 194.

\bibitem{Koshkin} Koshkin, S., Positive semigroup and algebraic Riccati
equations in Banach spaces, Positivity, 2016, vol. 20, pp. 541–563.





\end{thebibliography}
\end{document}